\definecolor{MyLinkColor}{rgb}{0,0,0.4}
\newtheorem{thm}{Theorem}[section]
\newtheorem{cor}[thm]{Corollary}
\newtheorem{prop}[thm]{Proposition}
\theoremstyle{definition}
\theoremstyle{remark} 
\newtheorem{rems}[thm]{Remarks}
\newtheorem{rem}[thm]{Remark}
\numberwithin{equation}{section}
\newcommand{\N}{\mathbb N}
\newcommand{\R}{\mathbb R}
\newcommand{\C}{\mathbb C}
\newcommand{\K}{\mathbb K}
\newcommand{\bA}{\mathbb A}
\newcommand{\bB}{\mathbb B}
\newcommand{\kL}{\mathcal L}
\newcommand{\ve}{\varepsilon}
\newcommand{\wh}{\widehat}
\newcommand{\ov}{\overline}
\newcommand{\rd}{\mathrm{d}}
\newcommand{\p}{\partial}
\newcommand{\bqn}{\begin{equation}}
\newcommand{\eqn}{\end{equation}}
\numberwithin{equation}{section} 
\begin{document}

\title[The principle of linearized stability for quasilinear evolution equations]{On the principle of linearized stability for  quasilinear evolution equations in time-weighted spaces}

\author{Bogdan--Vasile Matioc}
\address{Fakult\"at f\"ur Mathematik, Universit\"at Regensburg,   93053 Regensburg, Deutschland.}
\email{bogdan.matioc@ur.de}

\author{Lina Sophie Schmitz}
\address{Leibniz Universit\"at Hannover\\
Institut f\"ur Angewandte Mathematik\\
Welfengarten 1\\
30167 Hannover\\
Germany}
\email{schmitz@ifam.uni-hannover.de}

\author{Christoph Walker}
\address{Leibniz Universit\"at Hannover\\
Institut f\"ur Angewandte Mathematik\\
Welfengarten 1\\
30167 Hannover\\
Germany}
\email{walker@ifam.uni-hannover.de}

\begin{abstract}
 Quasilinear (and semilinear) parabolic problems of the form $v'=A(v)v+f(v)$ with strict inclusion $\mathrm{dom}(f)\subsetneq \mathrm{dom}(A)$ of the domains of the function $v\mapsto f(v)$ and the quasilinear part $v\mapsto A(v)$ are considered in the framework of time-weighted function spaces.
This allows one to establish the principle of linearized stability in intermediate spaces lying  between $\mathrm{dom}(f)$ and $\mathrm{dom}(A)$ and  yields a greater flexibility with respect to the phase space for the evolution. 
 In applications to differential equations such intermediate spaces may correspond  to critical spaces exhibiting a scaling invariance. 
Several examples are provided to demonstrate the  applicability of the results.
\end{abstract}

\keywords{Quasilinear parabolic problem; Principle of linearized stability; Interpolation spaces}
\subjclass[2020]{35B35; 	35B40;	35K59} 

\maketitle

\section{Introduction}\label{Sec:1}

The principle of linearized stability is a widely recognized method  for various nonlinear parabolic evolution equations to derive stability or instability properties of an equilibrium from the real parts of the spectral points of the linearization at the equilibrium. Extensive research has been conducted on this subject under different assumptions and using various techniques,
see e.g. \cite{DaPL88,D89,G88,Lu85,L95,PF81,P02,PSZ09, PSW18, PSZ09b,MW_MOFM20}, though this list is by no means exhaustive.

 In this research we focus on quasilinear parabolic problems
\begin{equation}\label{EE}
 v'=A(v)v+f(v)\,,\quad t>0\,,\qquad v(0)=v^0\,,
\end{equation}
(and the corresponding semilinear counterparts) assuming a strict inclusion $\mathrm{dom}(f)\subsetneq \mathrm{dom}(A)$ for the domains of the function $v\mapsto f(v)$ and the quasilinear part $v\mapsto A(v)$. We shall establish herein the principle of linearized stability in phase spaces for the initial values that lie between $\mathrm{dom}(f)$ and $\mathrm{dom}(A)$ enabling greater flexibility in applications. 
For this purpose we rely on previous results \cite{MW_PRSE,MRW25} on the well-posedness of~\eqref{EE} in time-weighted function spaces  and extend the principle of linearized stability \cite{MW_MOFM20} (for the case  $\mathrm{dom}(f)= \mathrm{dom}(A)$) to the present setting.

To be more precise, let $E_0$ and $E_1$ be Banach spaces over $\K\in \{\R,\C\}$ with continuous and dense embedding 
\[
E_1 \stackrel{d}{\hookrightarrow} E_0\,.
\]
 Given $\theta\in (0,1)$, we fix an admissible interpolation functor $(\cdot,\cdot)_\theta$ of exponent $\theta$ (see \cite[I.Sections~2.1, 2.11]{LQPP}) and set $E_\theta:= (E_0,E_1)_\theta$ and  $\|\cdot\|_\theta:=\|\cdot\|_{E_\theta}$. Then
\[
E_1\stackrel{d}{\hookrightarrow} E_\theta  \stackrel{d}{\hookrightarrow} E_0\,.
\]
Moreover,  we fix numbers
\begin{subequations}\label{AS1}
\begin{equation}\label{a1a}
0<\gamma< \beta< \xi<1\,, \qquad   q\ge 1\,,
\end{equation}
and assume for the quasilinear part in \eqref{EE}  that
\begin{equation}\label{a1b}
A\in C^{1-}\big(O_\beta,\mathcal{H}(E_1,E_0) \big)\,,
\end{equation}
where
\begin{equation}\label{a1c}
 \emptyset\not= O_\beta\, \text{ is an open subset of } E_\beta\,.
\end{equation}
By $\mathcal{H}(E_1,E_0)$  we denote the open subset of the bounded linear operators $\kL(E_1,E_0)$  
consisting of generators of strongly continuous analytic semigroups on $E_0$.
The semilinear part  $f:O_\xi\to E_\gamma$ is assumed to belong to the class of locally Lipschitz continuous mappings from the   open subset~${O_\xi:=O_\beta\cap E_\xi}$ of $E_\xi$ into $E_\gamma$.
 More  precisely, we assume  that for each~$R>0$ there is~$c(R)>0$  such that,
 for all $w,v\in O_\xi\cap \ov{\mathbb{B}}_{E_\beta}(0,R)$,
\begin{equation}\label{AS2x}
\|f(w)-f(v)\|_{\gamma}\le  c(R)\big[1+\|w\|_{\xi}^{q-1}+\|v\|_{\xi}^{q-1}\big]\big[\big(1+\|w\|_{\xi}+\|v\|_{\xi}\big) \|w-v\|_{\beta}+\|w-v\|_{\xi}\big]\,.
\end{equation}
 Furthermore, we define a critical value  $\alpha_{\rm  crit}<\xi$ by setting
\begin{equation}\label{X}
\alpha_{\rm  crit}:=\frac{q\xi-1-\gamma}{q-1} \quad \text{ if } q>1\,,\qquad \alpha_{\rm  crit}:= -\infty \quad \text{ if } q=1\,,
\end{equation}
and consider  initial values $v^0\in O_\alpha:=O_\beta\cap E_\alpha$ with  
\begin{equation}\label{XX}
 \alpha_{\rm  crit}\le\alpha\in (\beta,\xi)\,.
\end{equation}
\end{subequations}
 The well-posedness theory for \eqref{EE} developed in \cite{MRW25} for the critical case $\alpha = \alpha_{\rm crit} \in (\beta, \xi)$ relies 
on the assumption  that there  is an interpolation functor $\{\cdot,\cdot\}_{\alpha/\xi}$ of  
exponent~$\alpha/\xi$ and, for $\eta \in \{\alpha, \beta, \xi\}$, there are interpolation functors~${\{\cdot,\cdot\}_{\gamma/\eta}}$ of  
exponents~$\gamma/\eta$ such that
\begin{equation}\label{AS2y}
   E_\alpha\doteq \{E_0,E_\xi\}_{\alpha/\xi} \,, \qquad  E_\gamma\doteq \{E_0,E_\eta\}_{\gamma/\eta} \,, \quad \eta\in\{\alpha,\beta,\xi\}\,.
\end{equation} 
Assumption~\eqref{AS2y} is not really restrictive in applications:
\begin{rem}\label{RR1}
Assumption~\eqref{AS2y} is automatically satisfied if, for each~$\theta\in\{\gamma,\beta,\alpha,\xi\}$, the interpolation functor~${(\cdot,\cdot)_\theta}$ is
 chosen to be  always either the complex interpolation functor~$[\cdot,\cdot]_\theta$,  or the continuous interpolation functor~${(\cdot,\cdot)_{\theta,\infty}^0}$,
  or the real interpolation functor~$(\cdot,\cdot)_\theta=(\cdot,\cdot)_{\theta,p}$  with parameter $p\in [1,\infty]$.
This is a consequence of the reiteration theorems for these functors, see  e.g.~\cite[I.Remarks~2.11.2~(b)]{LQPP}.
\end{rem}

 It is also  worth emphasizing that \eqref{AS2x} is satisfied  in the particular case when
\begin{equation}\label{chris}
\|f(w)-f(v)\|_{\gamma}\le  c \big[\|w\|_{\xi}^{q-1}+\|v\|_{\xi}^{q-1}\big] \|w-v\|_{\xi} \,,\qquad w,\,v\in O_\xi\,.
\end{equation}
  The special case \eqref{chris} of \eqref{AS2x} frequently arises in applications; see, for instance, the examples provided in Section~\ref{Sec:5} and \cite[Lemma~4.1]{MW_PRSE}.

 Let us point out with respect to the parameters in \eqref{a1a} that the parameter $q\geq1$ measures the growth of the nonlinearity~$f$ with respect to the $E_\xi$-terms, and its value is  usually fixed
 and  cannot be adjusted freely.
In certain applications, such as considered in Examples~\ref{Exam1} and~\ref{Exam2}  below, there may be, however, some flexibility with respect to the choice of the 
 interpolation exponents $\,\beta, \gamma$, and $\xi$. 
These parameters then define the range of~${\alpha\ge\alpha_{\rm  crit}}$ for the phase space $E_\alpha$ with critical value $\alpha=\alpha_{\rm  crit}$. 
In certain applications, the critical space~$E_{\alpha_{\rm  crit}}$  can also be identified due to scaling invariance properties (see Example~\ref{Exam2}).

For simplicity,  we only consider herein the situation that $\alpha<\xi$,  as the case $\alpha > \xi$ is treated in~\cite{MW_MOFM20}\footnote{ The case $\alpha=\xi$ could, in principle, be incorporated into the present analysis, but requires a slightly different functional analytic setting since certain statements (for instance \eqref{solb} below) no longer hold. 
}.  
We emphasize that  thus $E_\xi \hookrightarrow E_\alpha\hookrightarrow E_\beta$  in the setting herein and  hence, the semilinear part  $f$, being defined on $O_\xi$, need not be defined on the phase space 
$E_\alpha$ and may require higher regularity than the quasilinear part $A$.
 Compared to the classical setting~\cite{Amann_Teubner, D89, MW_MOFM20}, where both $A$ and $f$ are assumed to be locally Lipschitz continuous on $O_\xi$,  
and well-posedness and stability  of equilibria are established for initial data $v^0 \in O_\alpha$ with~$\alpha > \xi$,
 we shall use condition~\eqref{AS2x} (or~\eqref{Vor3} in the semilinear case)  
 to  establish similar results for initial data $v^0 \in O_\alpha$ with~$\alpha < \xi$,  
 additionally assuming~\eqref{X}--\eqref{XX} (or~\eqref{Vor2}--\eqref{XXs} for the semilinear case).  
  In fact,   condition~\eqref{AS2x} (or~\eqref{Vor3}) implies precise quantitative estimates on the underlying semigroups  enabling the use of time-weighted spaces in the analysis of the evolution problem, 
which effectively capture the (singular) behavior of solutions at $t=0$ in $E_\xi$.
\\

We first recall the well-posedness of the quasilinear problem~\eqref{EE}.

\subsection*{Well-Posedness}
Existence and uniqueness of solutions to  problem \eqref{EE}  have been derived in \cite[Theorem~1.1]{MW_PRSE}  for $\alpha>\alpha_{\rm  crit}$ assuming~\eqref{AS1},  respectively  in \cite[Theorem~1.2]{MRW25} 
for the critical case $\alpha=\alpha_{\rm  crit}$ assuming~\eqref{AS1}-\eqref{AS2y}. 
The key ingredient to establish well-posedness for initial values $v^0$ in the phase space~$E_\alpha$ is the use of time-weighted spaces of continuous functions $v:(0,T]\to E_\xi$ satisfying
\begin{equation*} 
\sup_{t\in (0,T]}t^\mu\|v(t)\|_{\xi}<\infty\qquad \text{and}\qquad \lim_{t\to 0} t^\mu\|v(t)\|_{\xi}=0
\end{equation*}
for some $\mu\ge \xi-\alpha$, which are
adapted to the regularizing effects of the parabolic operator $A$.

 The relevant aspects of the results from \cite{MW_PRSE,MRW25} that are essential for our present purposes are summarized in Theorem~\ref{T0}. 
 To unify these aspects into a single statement applicable to both  critical and noncritical regimes, we assume throughout~\eqref{AS1}--\eqref{AS2y}. This approach provides a slight refinement of the well-posedness result established in \cite[Theorem~1.1]{MW_PRSE}, as detailed below.

\begin{thm}[Well-Posedness]\label{T0}
 Assume~\eqref{AS1}-\eqref{AS2y}. 
 Then, given any~${v^0\in O_\alpha}$, the Cauchy problem \eqref{EE} possesses a  unique maximal strong  solution
\begin{subequations}\label{sol}
\begin{equation} \label{sola}
\begin{aligned}
 v(\cdot;v^0)&\in C^1\big((0,t^+(v^0)),E_0\big)\cap C\big((0,t^+(v^0)),E_1\big)\cap  C \big([0,t^+(v^0)),O_\alpha\big)\cap C^{\alpha-\beta}\big([0, t^+(v^0)), E_\beta\big)
\end{aligned}
\end{equation}
with $ t^+(v^0)\in(0,\infty]$, such  that 
 \begin{equation} \label{solb}
\lim_{t\to 0} t^{\xi-\alpha}\|v(t;v^0)\|_{E_\xi}=0\,.
\end{equation}  
\end{subequations} 
Moreover, if  $v^0\in O_\alpha$ is such that $t^+(v^0)<\infty$ and  $v(\cdot;v^0):[0,t^+(v^0))\to E_\beta$ is uniformly H\"older continuous, then
\begin{equation}\label{blow}
 \underset{t\nearrow t^+(v^0)}{\limsup}\|f(v(t;v^0))\|_{0}=\infty\quad\text{ or }\quad \lim_{t\nearrow t^+(v^0)}\mathrm{dist}_{E_\beta}\big( v(t;v^0),\partial O_\beta\big)=0\,.
\end{equation}
\end{thm}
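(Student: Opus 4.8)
The statements \eqref{sola}--\eqref{solb} on existence, uniqueness and regularity of the maximal strong solution are not new: for $\alpha>\alpha_{\rm crit}$ they are precisely \cite[Theorem~1.1]{MW_PRSE}, while the critical case $\alpha=\alpha_{\rm crit}$ is covered by \cite[Theorem~1.2]{MRW25} under the reiteration hypothesis~\eqref{AS2y}. The plan is therefore to invoke these two results for \eqref{sola}--\eqref{solb}, and to devote the actual argument to the blow-up alternative~\eqref{blow}, which is the announced refinement. I would establish~\eqref{blow} by contraposition.

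So assume $t^+:=t^+(v^0)<\infty$, that $v:=v(\cdot;v^0):[0,t^+)\to E_\beta$ is uniformly H\"older continuous, and that \emph{both} alternatives in~\eqref{blow} fail. Failure of the first gives $M:=\sup_{t\in[0,t^+)}\|f(v(t))\|_0<\infty$. Uniform H\"older continuity together with $t^+<\infty$ makes $v(t)$ Cauchy in $E_\beta$ as $t\nearrow t^+$, so the limit $v^*:=\lim_{t\to t^+}v(t)$ exists in $E_\beta$ and lies in $\ov{O_\beta}$. Since $\mathrm{dist}_{E_\beta}(v(t),\partial O_\beta)\to \mathrm{dist}_{E_\beta}(v^*,\partial O_\beta)$ and the second alternative fails, this limiting distance is positive, whence $v^*\in O_\beta$. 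The dichotomy thus collapses to a single scenario: $v(t)\to v^*$ in $E_\beta$ with $v^*\in O_\beta$, and $f\circ v$ bounded in $E_0$.

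The core of the argument is to upgrade this $E_\beta$-convergence to convergence in the phase space $E_\alpha$. Extending $v$ by $v(t^+):=v^*$, the map $[0,t^+]\to\mathcal H(E_1,E_0)$, $t\mapsto A(v(t))$, is uniformly H\"older continuous, since $A\in C^{1-}(O_\beta,\mathcal H(E_1,E_0))$ and $v$ has relatively compact range in $O_\beta$ bounded away from $\partial O_\beta$; standard parabolic theory then furnishes a parabolic evolution operator $U(t,s)$, $0\le s\le t\le t^+$, with the smoothing bounds $\|U(t,s)\|_{\kL(E_0,E_\theta)}\le C(t-s)^{-\theta}$ for $\theta\in\{\alpha,\beta\}$. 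Fixing $t_0\in(0,t^+)$ and using the variation-of-constants identity $v(t)=U(t,t_0)v(t_0)+\int_{t_0}^t U(t,\tau)f(v(\tau))\,\rd\tau$ for the strong solution, I would let $t\nearrow t^+$: the semigroup term converges in $E_\alpha$ by strong continuity of $U$, while the integral term converges in $E_\alpha$ by dominated convergence, the majorant $\tau\mapsto CM(t^+-\tau)^{-\alpha}$ being integrable because $\alpha<1$. Hence $v(t)$ converges in $E_\alpha$, and by comparison with the $E_\beta$-limit this limit equals $v^*$; consequently $v^*\in O_\alpha$.

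With $v^*\in O_\alpha$ secured, the contradiction follows from a restart argument. I would use that the existence time provided by Theorem~\ref{T0} can be bounded below uniformly on sets of initial data that are bounded in $E_\alpha$ and stay a fixed distance from $\partial O_\beta$. Since $\{v(t_0):t_0\in[t^+/2,t^+)\}$ is such a set (it converges to $v^*\in O_\alpha$), restarting the problem from $v(t_0)$ with $t_0$ close enough to $t^+$ yields a solution living past $t^+$; gluing it to $v$ via uniqueness produces a strong solution on an interval strictly larger than $[0,t^+)$, contradicting the maximality of $t^+$. I expect the main obstacle to be the $E_\beta\to E_\alpha$ upgrade of the third paragraph: one must ensure that $U$ extends up to the endpoint $t^+$ with the stated smoothing estimates despite the loss of regularity of $v$ there, and that the improper integral is controlled uniformly within the time-weighted functional setting that governs the singular behavior of the solution as $t\to t^+$.
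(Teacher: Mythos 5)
There is a genuine gap in the first step. You assert that \eqref{sola}--\eqref{solb} are ``precisely'' \cite[Theorem~1.1]{MW_PRSE} in the noncritical case, but that theorem only yields $\lim_{t\to0}t^{\mu}\|v(t;v^0)\|_{E_\xi}=0$ for every $\mu>\xi-\alpha$, \emph{not} the endpoint decay \eqref{solb} with exponent exactly $\xi-\alpha$. Obtaining \eqref{solb} for $\alpha_{\rm crit}<\alpha$ is the announced refinement and is the only substantive content of the paper's proof: one writes the solution via the variation-of-constants formula, uses the interpolation hypothesis \eqref{AS2y} to get $t^{\xi-\alpha}U_{A(v)}(t,0)v^0\to0$ in $E_\xi$, and then controls $t^{\xi-\alpha}\int_0^tU_{A(v)}(t,\tau)f(v(\tau))\,\rd\tau$ in $E_\xi$ using the weaker decay already available. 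Your proposal never uses \eqref{AS2y} at all for this part, so the endpoint statement \eqref{solb} is left unproved in the noncritical regime.

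Conversely, you invest your entire argument in the blow-up alternative \eqref{blow}, which the paper does not reprove: it is quoted from \cite[Theorem~1.2~(v)~(b)]{MRW25} for the critical case, with the remark that the noncritical case is verbatim the same. Your contraposition--restart sketch (Hölder continuity gives an $E_\beta$-limit $v^*\in O_\beta$, boundedness of $f\circ v$ in $E_0$ plus the smoothing of the evolution operator upgrades this to $E_\alpha$-convergence, and a uniform local existence time near $v^*$ contradicts maximality) is the standard route and looks sound in outline, so this part is acceptable, just not where the new work lies. To repair the proposal, add the \eqref{AS2y}-based upgrade of the decay rate described above.
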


\begin{proof}
 The existence and uniqueness of  a maximal strong solution is established in  \cite[Theorem 1.2]{MRW25} for the critical value~$\alpha_{\rm  crit}=\alpha\in (\beta,\xi)$.
Moreover,  for the noncritical case~$\alpha_{\rm  crit}<\alpha\in (\beta,\xi)$, it is proven in~\cite[Theorem~1.1]{MW_PRSE}  (without assuming~\eqref{AS2y}) that \eqref{EE} has a unique maximal strong 
solution which satisfies~\eqref{sola} together with
 \begin{equation} \label{solb'}
\lim_{t\to 0} t^{\mu}\|v(t;v^0)\|_{E_\xi}=0
\end{equation}
for all $\mu>\xi-\alpha$ (instead of the stronger property~\eqref{solb}).
This solution  fulfills the variation-of-constants  formula  
\begin{equation*} 
v(t)=U_{A(v)}(t,0)v^0+ \int_0^t U_{A(v)}(t,\tau) f(v(\tau))\,\rd \tau\,,\qquad t\in(0,t^+(v^0))\,,
\end{equation*}
 where $U_{A(v)}(\cdot,\cdot)$ is the evolution operator associated with $A(v(\cdot))$.
Assuming~\eqref{AS2y}, it readily follows  that~${t^{\xi-\alpha} U_{A(v)}(t,0)v^0\to  0}$ in $E_\xi$ as $t\to0$, see e.g. the proof of \cite[Proposition~2.1]{MRW25}. 
Moreover, arguing as in the proof of \cite[Proposition~2.1, Eq. (2.18)]{MW_PRSE},  we infer  from \eqref{solb'} that
 \[
 t^{\xi-\alpha}\int_0^t U_{A(v)}(t,\tau) f(v(\tau))\,\rd \tau\underset{t\to 0}\longrightarrow 0 \qquad  \text{in $E_\xi$}\,,
 \]
 which proves  \eqref{solb}.
 
Finally, the blow-up criterion \eqref{blow} is provided for~$\alpha_{\rm  crit}=\alpha\in (\beta,\xi)$  in \cite[Theorem 1.2~(v)~(b)]{MRW25}.
The proof of~\eqref{blow} in the case when~$\alpha_{\rm  crit}<\alpha\in (\beta,\xi)$ is identical to that of \cite[Theorem 1.2~(v)~(b)]{MRW25} and therefore we  omit herein the details.
\end{proof}

\subsection*{Exponential Stability}
 We shall now present the principle of linearized stability in the phase space~$E_\alpha$ for the  range of exponents $\alpha$ specified  in~\eqref{XX}. 
To this end, let
\begin{subequations}\label{AS4}
\begin{equation}\label{AS3}
v_*\in O_1:= O_\beta\cap E_1 \qquad\text{with}\qquad  A(v_*)v_*+f(v_*)=0
\end{equation}
be an equilibrium solution to \eqref{EE}. 
In order to derive stability properties for $v_*$, we further assume that 
\begin{equation}\label{a4a}
f:O_\xi\rightarrow E_0 \quad\text{and}\quad A(\cdot)v_*: O_\xi\rightarrow  E_0 \quad\text{are Fr\'echet differentiable at}\ v_* 
\end{equation}
with Fr\'echet derivatives $\p f(v_*)\in\kL(E_\xi,E_0)$ and $ (\p A(v_*)[\cdot])v_*\in\kL( E_\xi,E_0)$, respectively.
Moreover, we assume that the linearized operator
 \[
 \bA:=  A(v_*)+(\p A(v_*)[\cdot])v_*+\p f(v_*)\in\kL(E_1,E_0)
 \]
  has a negative spectral bound, that is,
\begin{equation}\label{a4e}
-\omega_0:=s(\bA):=\sup\left\{\mathrm{Re}\, \lambda\,:\, \lambda \in \sigma(\bA)\right\}<0\,.
\end{equation}
\end{subequations}
Our assumptions will allow us to rely on the previously established principle of linearized stability from \cite{MW_MOFM20}.
 In fact, we shall prove the following result regarding the asymptotic exponential stability of the equilibrium~$v_*$ and thereby extend the corresponding result of \cite{MW_MOFM20} to the present framework.

\begin{thm}[Stability] \label{T1}
 Assume~\eqref{AS1}, \eqref{AS2y},  and~\eqref{AS4}.
Then, the equilibrium~$v_*$ is asymptotically exponentially stable
 in $E_\alpha$. More  precisely, given any $ \omega\in (0,\omega_0)$, there  exist $\ve_0>0$ and~${M\ge 1}$ such that, 
 for each $v^0\in \ov\bB_{E_\alpha}(v_*,\ve_0)$, the solution to \eqref{EE} exists globally in time and
\begin{equation}\label{stable}
\|v(t;v^0)-v_*\|_\alpha +t^{\xi-\alpha}\|v(t;v^0)-v_*\|_\xi \le M e^{-\omega t}\|v^0-v_*\|_\alpha\,,\qquad t> 0\,.
\end{equation}
\end{thm}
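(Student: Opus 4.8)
The plan is to translate the equilibrium to the origin and recast~\eqref{EE} as a semilinear perturbation of its linearization, then run a contraction argument in an exponentially and time-weighted space, importing the relevant semigroup estimates from~\cite{MW_MOFM20}. First I would set $w:=v-v_*$ and use the equilibrium relation~\eqref{AS3} to obtain
\[
w'=\bA w+R(w),\qquad R(w):=\big[A(v_*+w)-A(v_*)\big]w+r_A(w)+r_f(w),
\]
with $r_A(w):=A(v_*+w)v_*-A(v_*)v_*-(\p A(v_*)[w])v_*$ and $r_f(w):=f(v_*+w)-f(v_*)-\p f(v_*)w$, where $\bA$ is the linearized operator of~\eqref{AS4}. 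By the $C^{1-}$-regularity~\eqref{a1b} of $A$, the Fr\'echet differentiability~\eqref{a4a}, and the Lipschitz estimate~\eqref{AS2x} for $f$, the remainder satisfies $R(0)=0$ and $\p R(0)=0$ together with quantitative superlinear bounds on $\|R(w)-R(\wt w)\|_\gamma$ in terms of the $E_\beta$- and $E_\xi$-norms of $w,\wt w$; this is exactly the structure needed to absorb $R$ against the parabolic smoothing.

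Next I would record the semigroup estimates. Since $A(v_*)\in\mathcal{H}(E_1,E_0)$ and the lower-order terms $(\p A(v_*)[\cdot])v_*$ and $\p f(v_*)$ lie in $\kL(E_\xi,E_0)$, a standard perturbation argument gives $\bA\in\mathcal{H}(E_1,E_0)$; by~\eqref{a4e} and analyticity (so that the growth bound agrees with the spectral bound), for each fixed $\omega\in(0,\omega_0)$ there is $M\ge1$ with
\[
\|e^{t\bA}\|_{\kL(E_\rho,E_\sigma)}\le M\,t^{-(\sigma-\rho)}e^{-\omega t},\qquad t>0,\quad 0\le\rho\le\sigma\le1,
\]
which I take over from~\cite{MW_MOFM20}.

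Then, following the strategy of~\cite{MW_MOFM20} adapted to the present time-weighted framework, I would fix $\omega\in(0,\omega_0)$ and work in the Banach space of functions $w$ on $(0,\infty)$ with finite norm
\[
\|w\|_\ast:=\sup_{t>0}e^{\omega t}\|w(t)\|_\alpha+\sup_{t>0}t^{\xi-\alpha}e^{\omega t}\|w(t)\|_\xi,
\]
subject to the vanishing-trace condition $t^{\xi-\alpha}\|w(t)\|_\xi\to0$ as $t\to0$ suggested by~\eqref{solb}. On a small closed ball I would study the fixed-point map
\[
\Phi(w)(t):=e^{t\bA}w^0+\int_0^t e^{(t-\tau)\bA}R(w(\tau))\,\rd\tau,\qquad w^0:=v^0-v_*.
\]
Combining the semigroup estimates above with the superlinear bounds on $R$ reduces the convolution term to beta-type integrals $\int_0^t(t-\tau)^{-a}\tau^{-b}\,\rd\tau$ whose integrability is guaranteed precisely by the admissible range $\alpha_{\rm crit}\le\alpha\in(\beta,\xi)$ fixed in~\eqref{X}--\eqref{XX}; the exponential weight is preserved because $e^{-\omega(t-\tau)}e^{-q\omega\tau}\le e^{-\omega t}$ for $\tau\in[0,t]$, using $q\ge1$. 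Taking $\|w^0\|_\alpha=\|v^0-v_*\|_\alpha$ small enough should make $\Phi$ a self-map and a contraction, whose unique fixed point is a global-in-time solution obeying exactly the decay~\eqref{stable}.

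The hard part will be the self-mapping and contraction estimates for the nonlinear convolution in the critical regime $\alpha=\alpha_{\rm crit}$: there the weight exponent $\xi-\alpha$ is the largest admissible, the relevant beta integrals are borderline, and a direct contraction in $\|\cdot\|_\ast$ fails. As in the well-posedness analysis underlying Theorem~\ref{T0}, I expect to close the estimates by exploiting the vanishing-trace condition together with a small-time cut-off, so that the subcritical slack is recovered near $t=0$. Once global existence and the decay of $w$ are in hand, the uniqueness statement of Theorem~\ref{T0} identifies $w=v(\cdot;v^0)-v_*$ and forces $t^+(v^0)=\infty$, which yields~\eqref{stable} and completes the proof.
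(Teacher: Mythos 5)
Your overall strategy---translate $v_*$ to the origin, write the equation as $w'=\bA w+R(w)$, and contract in an exponentially and time-weighted space---is essentially the one the paper uses only for the \emph{semilinear} problem (part (b) of Theorem~\ref{Tsemi}), and it breaks down for the quasilinear equation at the very first step. Your remainder contains the term $[A(v_*+w)-A(v_*)]w$, and your claim that $R$ admits ``quantitative superlinear bounds on $\|R(w)-R(\wt w)\|_\gamma$ in terms of the $E_\beta$- and $E_\xi$-norms'' is false for this term: since $A(v)\in\kL(E_1,E_0)$, the best available bound is $\|[A(v_*+w)-A(v_*)]w\|_0\lesssim\|w\|_\beta\|w\|_1$, so $\Phi(w)$ is not even defined on a space of $E_\xi$-valued curves. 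If you enlarge the fixed-point norm to include a weighted $E_1$-component, the convolution forces you to estimate $\int_0^t\|e^{(t-\tau)\bA}\|_{\kL(E_0,E_1)}\,\rd\tau$, and $\|e^{s\bA}\|_{\kL(E_0,E_1)}\sim s^{-1}$ is not integrable. This is precisely the classical obstruction to treating a quasilinear problem as a semigroup perturbation of its linearization; circumventing it is what the evolution-operator machinery ($U_{A(v)}(t,s)$, Proposition~\ref{FF}) or maximal regularity is for, and your proposal does not supply either. (The difficulty you do flag---the borderline Beta integrals at $\alpha=\alpha_{\rm crit}$---is real but secondary.)

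The paper's proof is entirely different and much shorter: it proves no new fixed-point theorem at all. Corollary~\ref{FFF} shows that the solution map smooths $E_\alpha$-data into $E_\zeta$ for any $\zeta\in(\xi,1)$ after a fixed short time $t_*$, Lipschitz-continuously with respect to $v^0\in E_\alpha$. Hence for $v^0$ close to $v_*$ in $E_\alpha$, the value $v(t_*;v^0)$ lies in a small $E_\zeta$-ball around $v_*$, where the already-established quasilinear principle of linearized stability of \cite[Theorem~1.3]{MW_MOFM20} applies (there $\zeta>\xi$, so both $A$ and $f$ are locally Lipschitz on the phase space and the classical hypotheses hold). The semiflow property and uniqueness give global existence and decay $\|v(t;v^0)-v_*\|_\zeta\le Ne^{-\bar\omega(t-t_*)}\|v(t_*;v^0)-v_*\|_\zeta$ for $t\ge t_*$, which dominates the $E_\alpha$- and weighted $E_\xi$-norms by the embeddings $E_\zeta\hookrightarrow E_\xi\hookrightarrow E_\alpha$; the interval $(0,t_*]$ is handled by the local Lipschitz estimate \eqref{F1}. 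If you want to salvage a direct argument in the spirit of your proposal, you would have to keep the quasilinear part inside the evolution operator rather than inside $R$; as written, the proposal has a genuine gap.
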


 The interpolation assumptions~\eqref{AS2y}  are not required in any case:

\begin{rem}\label{RRa1}
In the noncritical case $  \alpha_{\rm  crit}<\alpha\in  (\beta,\xi)$  one may actually drop the assumptions~\eqref{AS2y} in Theorem~\ref{T1}.
The claim remains valid (with a similar proof) provided  that the estimate  \eqref{stable} is   replaced by
\begin{equation*}
\|v(t;v^0)-v_*\|_\alpha + t^{\mu}\|v(t;v^0)-v_*\|_\xi\le M e^{-\omega t}\|v^0-v_*\|_\alpha\,,\qquad t > 0\,,
\end{equation*} 
for some fixed (but arbitrary)  $\mu>\xi-\alpha$. 
Since in many applications \eqref{AS2y} are automatically satisfied (see Remark~\ref{RR1}), 
we chose to present  the (slightly) less general,  but more concise,  result in Theorem~\ref{T1}.
\end{rem}

  \subsection*{Semilinear Evolution Equations}   The previous results hold, of course, also for the semilinear evolution problem    
 \begin{equation}\label{SEP}
v'=Av+f(v)\,,\quad t>0\,,\qquad v(0)= v^0\,,
\end{equation}
 with 
\begin{subequations}\label{Vor}
\begin{equation}\label{Vor1}
A\in \mathcal{H}(E_1,E_0)\,.
\end{equation} 
 However, for this particular case, we present a sharper version of  the exponential stability result in Theorem~\ref{T1}. 
Indeed, we now only assume that  
 \begin{equation}\label{Vor2}
  0\leq \gamma<   \xi\leq  1\,,\qquad (\gamma,\xi)\neq (0,1)\,,\qquad  q\ge 1\,,
\end{equation}
 and define again  $\alpha_{\rm  crit}<\xi$ by
\begin{equation}\label{Xs}
\alpha_{\rm  crit}:=\frac{q\xi-1-\gamma}{q-1} \quad \text{ if } q>1\,,\qquad \alpha_{\rm  crit}:= -\infty \quad \text{ if } q=1\,.
\end{equation}
We assume that
\begin{equation}\label{XXs}
 \text{either}\qquad \alpha_{\rm  crit}<\alpha\in [\gamma,\xi)\qquad \text{or}\qquad \alpha_{\rm  crit}=\alpha\in (\gamma,\xi)\,,
\end{equation}
and  let $O_\alpha$ be  an arbitrary open subset  of~$E_\alpha$. 
The semilinearity  $f:O_\xi:=O_\alpha\cap E_\xi\to E_\gamma$   is again assumed to be locally Lipschitz continuous in the sense that   for each~${R>0}$ there is a  constant~$c(R)>0$ such that 
 \begin{equation}\label{Vor3}
\|f(w)-f(v)\|_{\gamma}\le   c(R)\big[1+\|w\|_{\xi}^{q-1}+\|v\|_{\xi}^{q-1}\big]\big[\big(1+\|w\|_{\xi}+\|v\|_{\xi}\big) \|w-v\|_{\alpha}+\|w-v\|_{\xi}\big] 
\end{equation}
\end{subequations}
for all $w,\,v\in  O_\xi \cap\ov{\mathbb{B}}_{E_\alpha}(0,R)$.

 The well-posedness of \eqref{EE} is established in \cite{MRW25} for the critical case~$\alpha=\alpha_{\rm  crit}\in(\gamma,\xi)$ under the assumption that, when $\xi<1$,
 there exists an interpolation functor~$\{\cdot,\cdot\}_{\alpha/\xi}$ of exponent $\alpha/\xi$,  and, when~${\gamma>0}$, that for~${\eta\in\{\alpha,\xi\}\setminus\{1\}}$ 
there are  interpolation functors~$\{\cdot,\cdot\}_{\gamma/\eta}$ of exponent~$\gamma/\eta$ such that
\begin{equation}\label{Vorcc}
   E_\alpha\doteq \{E_0,E_\xi\}_{\alpha/\xi}\,\,\, \text{if $\xi<1$}  \,, \qquad  E_\gamma\doteq \{E_0,E_\eta\}_{\gamma/\eta} \,\,\, \text{if $\gamma>0$\,, 
   \ $\eta\in\{\alpha,\xi\}\setminus\{1\}$\,.}
\end{equation}

Under these assumptions the Cauchy problem \eqref{SEP} is locally well-posed in $O_\alpha$.
Similar to the quasilinear problem~\eqref{EE}, 
we also assume in the noncritical case the interpolation property \eqref{Vorcc}. 
This leads to a unified and slightly refined well-posedness result for \eqref{SEP}, compared to \cite[Theorem 1.3]{MRW25}.

\begin{thm}[Well-Posedness]\label{T0'}
 Assume~\eqref{Vor}-\eqref{Vorcc}.
  Then, given any $v^0\in O_\alpha$, the  semilinear Cauchy   problem~\eqref{SEP} possesses a  unique maximal strong  solution
\begin{subequations}\label{solse}
\begin{equation} 
\begin{aligned}
 v(\cdot;v^0)&\in C^1\big((0,t^+(v^0)),E_0\big)\cap C\big((0,t^+(v^0)),E_1\big)\cap  C \big([0,t^+(v^0)),O_\alpha\big)
\end{aligned}
\end{equation}
with $ t^+(v^0)\in(0,\infty]$, such  that 
 \begin{equation} 
\lim_{t\to 0} t^{\xi-\alpha}\|v(t;v^0)\|_{E_\xi}=0\,.
\end{equation} 
\end{subequations} 
Moreover, if  $v^0\in O_\alpha$ is such that $t^+(v^0)<\infty$,  then 
\begin{equation}\label{blowse1}
 \underset{t\nearrow t^+(v^0)}{\limsup}\|f(v(t;v^0))\|_{0}=\infty\qquad\text{ or }\qquad \lim_{t\nearrow t^+(v^0)}\mathrm{dist}_{E_\alpha}\big( v(t;v^0),\partial O_\alpha\big)=0\,.
\end{equation}
\end{thm}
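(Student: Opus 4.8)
The plan is to reduce \eqref{SEP} to its variation-of-constants formulation and to run a contraction argument in a time-weighted space, exploiting that in the semilinear situation the evolution operator is simply the analytic semigroup $\{e^{tA}\}_{t\ge0}$ generated by the fixed operator $A\in\mathcal H(E_1,E_0)$. Thus I would seek a solution of
\begin{equation}\label{vocse}
v(t)=e^{tA}v^0+\int_0^t e^{(t-\tau)A}f(v(\tau))\,\rd\tau\,,\qquad t\in(0,t^+(v^0))\,,
\end{equation}
working in the space of those $v\in C((0,T],E_\xi)$ with values in $O_\alpha$ for which $t^{\xi-\alpha}\|v(t)\|_\xi$ is bounded and tends to $0$ as $t\to0$. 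The decisive ingredient is the parabolic smoothing bound $\|e^{sA}\|_{\mathcal L(E_\gamma,E_\xi)}\le Cs^{-(\xi-\gamma)}e^{\omega s}$ combined with the growth condition \eqref{Vor3}: inserting $\|f(v(\tau))\|_\gamma\le C(1+\|v(\tau)\|_\xi^q)$ and $\|v(\tau)\|_\xi\le\rho\,\tau^{-(\xi-\alpha)}$ into the Duhamel term and evaluating the resulting Beta integral produces, after multiplication by the weight $t^{\xi-\alpha}$, the factor $t^{\,1-(\xi-\gamma)-(q-1)(\xi-\alpha)}$. Its exponent is nonnegative exactly when $\alpha\ge\alpha_{\rm crit}$, while the constraints in \eqref{Vor2} and \eqref{XXs} guarantee the convergence of the underlying Beta integral; this is precisely the mechanism behind the admissible range \eqref{XXs}.

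For the existence and uniqueness of a maximal strong solution with the regularity \eqref{solse}, I would invoke \cite[Theorem~1.3]{MRW25} in the critical case $\alpha=\alpha_{\rm crit}\in(\gamma,\xi)$, where the above exponent vanishes and the fixed-point argument must be carried out in the refined time-weighted space incorporating the interpolation identities \eqref{Vorcc}. In the noncritical range $\alpha_{\rm crit}<\alpha\in[\gamma,\xi)$ the exponent is strictly positive, so the same contraction argument applies and is in fact simpler, yielding a unique maximal solution together with the weaker convergence
\[
\lim_{t\to0}t^{\mu}\|v(t;v^0)\|_{E_\xi}=0\qquad\text{for every }\mu>\xi-\alpha\,,
\]
in analogy with \eqref{solb'}.

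It then remains to upgrade this, in the noncritical case, to the sharp statement $\lim_{t\to0}t^{\xi-\alpha}\|v(t;v^0)\|_\xi=0$. I would apply $t^{\xi-\alpha}\|\cdot\|_\xi$ to the two terms of \eqref{vocse} separately. For the semigroup term, the interpolation identity $E_\alpha\doteq\{E_0,E_\xi\}_{\alpha/\xi}$ from \eqref{Vorcc} (when $\xi<1$) yields $t^{\xi-\alpha}e^{tA}v^0\to0$ in $E_\xi$ as $t\to0$ for every $v^0\in E_\alpha$, exactly as in the proof of \cite[Proposition~2.1]{MRW25}. For the Duhamel term, feeding the already established convergence for $\mu>\xi-\alpha$ into the smoothing estimate and arguing as in \cite[Proposition~2.1, Eq.~(2.18)]{MW_PRSE} gives $t^{\xi-\alpha}\int_0^t e^{(t-\tau)A}f(v(\tau))\,\rd\tau\to0$ in $E_\xi$. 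Together these establish \eqref{solse}.

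Finally, for the blow-up alternative \eqref{blowse1} I would argue by contradiction along the lines of \cite[Theorem~1.3]{MRW25}: if $t^+(v^0)<\infty$ while $\limsup_{t\nearrow t^+}\|f(v(t))\|_0<\infty$ and $\mathrm{dist}_{E_\alpha}(v(t),\partial O_\alpha)$ stays bounded away from $0$, then \eqref{vocse} together with the semigroup smoothing shows that $v(t)$ is uniformly continuous into $E_\alpha$ as $t\nearrow t^+$, hence converges to a limit lying in $O_\alpha$, from which the solution may be restarted, contradicting maximality. The main obstacle is the passage to the sharp weight $t^{\xi-\alpha}$: both in the critical regime, where the Duhamel exponent is exactly $0$ and the limit $0$ must be extracted from the absolute continuity of the convolution together with \eqref{Vorcc}, and in the noncritical upgrade, the delicate point is the convergence of the Duhamel term, which is exactly what the interpolation hypotheses \eqref{Vorcc} and the time-weighted functional framework are designed to control.
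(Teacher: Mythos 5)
Your proposal is correct and follows essentially the same route as the paper: the paper's proof simply cites \cite[Theorem~1.3]{MRW25} for the critical case and notes that the noncritical case follows by the same arguments, and your reconstruction of those arguments (Duhamel fixed point in $C_{\xi-\alpha}((0,T],E_\xi)$, the Beta-integral exponent $1-(\xi-\gamma)-(q-1)(\xi-\alpha)\ge 0$ characterizing $\alpha\ge\alpha_{\rm crit}$, the upgrade from $\mu>\xi-\alpha$ to the sharp weight via \eqref{Vorcc}, and the restart argument for \eqref{blowse1}) matches both the cited proofs and the paper's own treatment of the analogous quasilinear Theorem~\ref{T0}. No gaps worth flagging.
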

\begin{proof}
This result is proven in \cite[Theorem 1.3]{MRW25}  for the critical case   $\alpha=\alpha_{\rm  crit}\in (\gamma,\xi)$. 
For~$\alpha_{\rm  crit}<\alpha\in [\gamma,\xi)$ the claim follows by arguing along the lines of the proof of \cite[Theorem 1.3]{MRW25} (we omit therefore the details).
For an alternative proof in the case $\alpha_{\rm  crit}<\alpha\in [\gamma,\xi)$ under the restriction $O_\alpha=E_\alpha$ 
(but with a stronger blow-up criterion than in \eqref{blowse1}) we refer to \cite[Theorem 1.2]{MW_PRSE}.
\end{proof}

Let again
   \begin{subequations}\label{MM}
\begin{equation}\label{MM1}
v_*\in O_1:= O_\alpha\cap E_1 \qquad\text{with}\qquad A v_*+f(v_*)=0
\end{equation}
be an equilibrium solution to \eqref{SEP} such that 
\begin{equation}\label{MM2}
f:O_\xi\rightarrow E_0  \qquad\text{is Fr\'echet differentiable at $v_*$}
\end{equation}
 with Fr\'echet derivative $\p f(v_*)\in\kL(E_\xi,E_0)$.
For the linearized operator
  \[
 \bA:=  A+\p f(v_*)\in\kL(E_1,E_0)
 \]
we assume a negative spectral bound, that is,
\begin{equation}\label{MM5}
-\omega_0:=s(\bA):=\sup\left\{\mathrm{Re}\, \lambda\,:\, \lambda \in \sigma(\bA)\right\}<0\,.
\end{equation}
\end{subequations}

If $0<\gamma<\xi< 1$ in \eqref{Vor}, the principle of linearized stability from \cite{MW_MOFM20} can be applied (similarly as in the quasilinear case) to establish the exponential stability of the equilibrium solution~$v_*$ to the semilinear evolution problem~\eqref{SEP}. In contrast, when $\gamma = 0$ or $\xi = 1$ in \eqref{Vor}, this approach is no longer applicable. Nonetheless, exponential stability of the equilibrium~$v_*$ can still be obtained under additional, yet mild, structural assumptions.
More precisely, we assume  that there exist
\begin{subequations}\label{VorMM}
\begin{equation}\label{MM3}
\gamma_*\in[0,\gamma] \,, \qquad   q_*>1\,,
\end{equation}
together with constants $r_*,\,c_*>0$ such that
\begin{equation}\label{MM4}
\|f(w+v_*)-f(v_*)-\p f(v_*)w \|_{\gamma_*}\le c_* \|w\|_\xi^{q_*}\,,\qquad w\in \wh O_\xi\cap{\ov{\mathbb{B}}_{E_\alpha}(0,r_*)}\,,
\end{equation}
where $\wh O_\xi := O_\xi - v_*$. Concerning the choice of $q_*$ and $\gamma_*$  see also Remark~\ref{RRa2'} below.
\end{subequations}

\begin{thm}[Stability]\label{Tsemi}
 Assume \eqref{Vorcc}, \eqref{MM}, and  either
\begin{itemize}  
\item[(a)] \eqref{Vor} with $0<\gamma<\xi< 1$;
\end{itemize}
or 
\begin{itemize}  
\item[(b)] \eqref{Vor}  with $\gamma=0$ or $\xi=1$, and \eqref{VorMM}. In this case set
\begin{equation*}
\alpha_{\rm  crit}^*:=\frac{q_*\xi-1-\gamma_*}{q_*-1}<\xi
\end{equation*}
and assume additionally that  $\alpha\ge \alpha_{\rm  crit}^*$ with strict inequality  $\alpha>\alpha_{\rm  crit}^*$ 
when~$\gamma_*\in (0,\gamma)$  or~$\alpha=\gamma$.
\end{itemize}
Then, the equilibrium solution~$v_*$ to \eqref{SEP} is asymptotically exponentially stable in $E_\alpha$. That is, given any~${\omega\in (0,\omega_0)}$, there are~$\ve_0>0$ and~${M\ge 1}$ such that, 
 for each $v^0\in \ov\bB_{E_\alpha}(v_*,\ve_0)$, the solution to~\eqref{SEP} exists globally in time and
\begin{equation}\label{stable'}
\|v(t;v^0)-v_*\|_\alpha + t^{\xi-\alpha}\|v(t;v^0)-v_*\|_\xi\le M e^{-\omega t}\|v^0-v_*\|_\alpha\,,\qquad t  >  0\,.
\end{equation}
\end{thm}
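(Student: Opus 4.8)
The plan is to reduce Theorem~\ref{Tsemi} to the already-established principle of linearized stability of \cite{MW_MOFM20}, which applies in the ``classical'' regime where the nonlinearity and the linear part share the same domain and the initial data lie in an interpolation space of exponent \emph{above} the nonlinearity's regularity index. The central device will be to transfer the problem from the scale $(E_0,E_1)$ to a rescaled scale in which the relevant exponents satisfy the hypotheses of \cite{MW_MOFM20}. First I would substitute $w:=v-v_*$ so that $w$ solves $w'=\bA w+g(w)$ with $g(w):=f(w+v_*)-f(v_*)-\p f(v_*)w$, and by \eqref{MM5} the operator $\bA$ generates an exponentially decaying analytic semigroup on $E_0$ with $s(\bA)=-\omega_0<0$; fix $\omega\in(0,\omega_0)$. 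The remainder $g$ vanishes to higher order at $w=0$, which is precisely what the stability argument exploits.

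In case~(a), where $0<\gamma<\xi<1$, the situation is directly amenable to the framework of \cite{MW_MOFM20}: one verifies that $g$ inherits from \eqref{Vor3} a local Lipschitz estimate on $O_\xi$ with the same growth exponent $q$, that $g(0)=0$ and $\p g(0)=0$, and that the interpolation identities \eqref{Vorcc} hold. The principle of linearized stability then yields a ball $\ov\bB_{E_\alpha}(v_*,\ve_0)$ from which solutions exist globally and decay; the time-weighted estimate \eqref{stable'} follows from the smoothing of the semigroup combined with \eqref{solse}, exactly as in the quasilinear Theorem~\ref{T1}. Thus case~(a) is essentially a citation once the hypotheses are matched.

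The substantive work is in case~(b), where $\gamma=0$ or $\xi=1$ causes the argument of \cite{MW_MOFM20} to break down (cf.\ the footnote on $\alpha=\xi$ and the failure of \eqref{solb}-type statements at the endpoints). Here I would use the structural assumption \eqref{VorMM}: the higher-order estimate \eqref{MM4} controls $g$ in the \emph{shifted} index $\gamma_*\le\gamma$ with the \emph{strictly larger} growth exponent $q_*>1$. The strategy is a fixed-point/bootstrap argument in the time-weighted norm $\|v\|_{\mathbb{E}}:=\sup_{t>0}\big(e^{\omega t}\|v(t)\|_\alpha+t^{\xi-\alpha}e^{\omega t}\|v(t)\|_\xi\big)$ applied to the variation-of-constants map
\[
(\mathcal{T}w)(t)=e^{t\bA}w^0+\int_0^t e^{(t-\tau)\bA}g(w(\tau))\,\rd\tau\,.
\]
Using the exponential decay of $e^{t\bA}$ and its $E_0\to E_\xi$, $E_{\gamma_*}\to E_\alpha$ smoothing bounds, together with \eqref{MM4} written as $\|g(w(\tau))\|_{\gamma_*}\le c_*\|w(\tau)\|_\xi^{q_*}\le c_*\tau^{-q_*(\xi-\alpha)}e^{-q_*\omega\tau}\|w\|_{\mathbb{E}}^{q_*}$, one estimates the integral term. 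The decisive point is the convergence of $\int_0^t(t-\tau)^{-(\xi-\gamma_*)}\tau^{-q_*(\xi-\alpha)}\rd\tau$ near $\tau=0$: this is exactly where the threshold $\alpha\ge\alpha_{\rm crit}^*=(q_*\xi-1-\gamma_*)/(q_*-1)$ enters, since $q_*(\xi-\alpha)<1$ is equivalent to $\alpha>\alpha_{\rm crit}^*$ (and the borderline $\alpha=\alpha_{\rm crit}^*$ is admissible only when $\gamma_*=\gamma$ and $\alpha>\gamma$, which keeps the Beta-integral finite while avoiding the endpoint pathologies).

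The main obstacle I anticipate is the critical/borderline bookkeeping in case~(b): one must show that when $\alpha=\alpha_{\rm crit}^*$ the relevant time-weight integral is not merely finite but can be made \emph{small} by shrinking $\ve_0$ (so that $\mathcal{T}$ is a contraction on a small ball), which typically requires an additional argument beyond integrability—either exploiting the $\lim_{t\to0}t^{\xi-\alpha}\|v\|_\xi=0$ property from \eqref{solse} to gain a vanishing factor, or absorbing the borderline term via the superlinear power $q_*>1$ and a continuity-in-$\ve_0$ estimate. The exclusions ``$\gamma_*\in(0,\gamma)$ or $\alpha=\gamma$'' in the statement signal precisely the configurations where the naive estimate fails and strict inequality must be imposed; handling these cleanly, while still recovering the sharp time-weighted decay \eqref{stable'} rather than a weaker $t^\mu$-weight, is the delicate part of the proof.
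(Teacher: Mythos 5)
Your proposal follows essentially the same route as the paper: case (a) is reduced to the classical principle of linearized stability of \cite{MW_MOFM20} combined with the parabolic smoothing estimates of Proposition~\ref{FFsemi}, exactly as in Theorem~\ref{T1}, and case (b) is handled by estimating $u=v-v_*$ through the variation-of-constants formula in the exponentially weighted norm $\sup_{\tau}e^{\omega\tau}\big(\|u(\tau)\|_\alpha+\tau^{\xi-\alpha}\|u(\tau)\|_\xi\big)$, with the remainder $\widehat f(u)=f(u+v_*)-f(v_*)-\p f(v_*)u$ controlled via \eqref{MM4}. Of the two resolutions you offer for the ``smallness'' issue you flag at the end, the paper uses the second: since $q_*>1$, the nonlinear contribution carries a factor $c_*ML^{q_*-1}$ that is made $\le 1$ simply by shrinking the a priori bound $L$ (no smallness of the Beta-integral is required), and the remaining delicacy at $\alpha=\alpha^*_{\rm crit}$ is absorbed by replacing $\gamma_*$ with a slightly smaller $\gamma_0$ when $\gamma_*\in(0,\gamma)$, which is precisely why strict inequality is imposed in that configuration. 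One bookkeeping slip in your outline: $q_*(\xi-\alpha)<1$ is \emph{not} equivalent to $\alpha>\alpha^*_{\rm crit}$ (it is the weaker condition $\alpha>\xi-1/q_*$); the threshold $\alpha\ge\alpha^*_{\rm crit}$, i.e.\ $q_*(\xi-\alpha)\le 1+\gamma_*-\alpha$, is what ensures that the power $t^{\,1+\gamma_*-\alpha-q_*(\xi-\alpha)}$ produced by the convolution integral (after multiplication by the weight $t^{\xi-\alpha}$) remains bounded as $t\to0$, rather than mere integrability of the singularity at $\tau=0$. This does not affect the viability of the argument, and the rest of the plan matches the paper's proof.
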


\begin{rems}\label{RRa2'}   (i) In applications, it often holds that  $(\gamma_*, q_*) = (\gamma, q)$ and hence $\alpha_{\rm  crit}= \alpha_{\rm  crit}^*$, see e.g. Example~\ref{Exam0}  below  and~\cite[Example 4.1]{MRW25}.

 (ii) In the noncritical case $\alpha_{\rm crit} < \alpha = \gamma$ (assuming (a)) or  $\max\{\alpha_{\rm  crit},\,  \alpha_{\rm  crit}^*\} < \alpha \in [\gamma, \xi)$ (assuming (b)), 
the condition~\eqref{Vorcc} in Theorem~\ref{Tsemi}  can be omitted, 
and the claim remains valid (with a similar proof) provided  that~\eqref{stable'} is   replaced by
\begin{equation*}
\|v(t;v^0)-v_*\|_\alpha +t^{\mu}\|v(t;v^0)-v_*\|_\xi\le M e^{-\omega t}\|v^0-v_*\|_\alpha\,,\qquad t> 0\,,
\end{equation*} 
for some fixed (but arbitrary)  $\mu>\xi-\alpha$. 
For similar reasons as in Remark~\ref{RRa1}, we present  herein the (slightly) less general  but more concise result in Theorem~\ref{Tsemi}.
\end{rems}

\subsection*{Instability}  For the sake of completeness we also state  conditions sufficient for instability of an equilibrium solution~$v_*\in O_1\subset E_1$ 
to the quasilinear problem \eqref{EE} or the semilinear problem \eqref{SEP}.
Specifically, we assume that 
  \begin{subequations}\label{AsINST}
  \begin{equation}\label{a2a}
(A,f)\in C^{2-}\big(O_1,\kL(E_1,E_0)\times  E_0\big)
\end{equation}
with Fr\'echet derivatives
\begin{equation}\label{a2b}
 \p f(v_*),\, (\p A(v_*)[\cdot])v_* \in \kL(E_{\eta},E_0)\quad\text{for some $\eta\in[\beta,1)$}\,.
\end{equation}
Moreover, we require that the linearized operator 
$$
\bA= A(v_*)+(\p A(v_*)[\cdot])v_*+\p f(v_*)\in\mathcal{H}(E_1,E_0)
$$ 
satisfies  
\begin{equation}\label{a2c}
 \sigma_+(\bA):=\{\lambda\in \sigma(\bA)\,:\,  {\rm Re\,}\lambda>0\}\neq \emptyset\,,\qquad \inf \{{\rm Re\,}\lambda\,:\, \lambda\in \sigma_+(\bA)\}>0\,.
\end{equation}
  \end{subequations}
Conditions~\eqref{AsINST} guarantee the instability  of $v_*$   in the phase space $E_\alpha$: 

\begin{thm}[Instability] \label{T3}
 Assume \eqref{AsINST} and that the assumptions of Theorem~\ref{T0} are satisfied for the quasilinear problem~\eqref{EE} 
or, alternatively, the assumptions of Theorem~\ref{T0'} are satisfied for the semilinear problem~\eqref{SEP}. 
 
 Then, the equilibrium $v_*$ is unstable in $E_\alpha$. 
 More precisely, there exists a neighborhood $U$ of $v_*$ in $O_\alpha$ such that for each $n\in\N^*$ 
 there exists $v_{n}^0\in \bB_{E_\alpha}(v_*,1/n)\cap O_\alpha$ such that the corresponding solution $v(\cdot; v_n^0)$  to \eqref{EE} or \eqref{SEP} satisfies
 \[
 v(t; v_n^0)\not\in U\quad\text{for some $t\in(0,t^+(v_n^0))$.}
 \]
\end{thm}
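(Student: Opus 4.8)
The plan is to reduce the instability to the principle of linearized instability established in \cite{MW_MOFM20} by passing to the equation for the deviation $w:=v-v_*$ and exploiting that the unstable dynamics are governed by a smooth, spectrally separated subspace. First I would substitute $w=v-v_*$ into \eqref{EE} (respectively \eqref{SEP}) and use the equilibrium identity in \eqref{AS3} (resp.\ \eqref{MM1}) to rewrite the Cauchy problem as $w'=\bA w+G(w)$, where $\bA$ is the linearized generator from \eqref{AsINST} and $G(w):=A(v_*+w)(v_*+w)+f(v_*+w)-\bA w$ (resp.\ its semilinear analogue) collects the remainder. The regularity hypothesis \eqref{a2a} together with \eqref{a2b} yields a Taylor estimate showing that $G$ vanishes to second order at $w=0$; more precisely, one gets a superlinear bound of the form $\|G(w)\|_0\le c\,\omega(\|w\|_\xi)\,\|w\|_\eta$ near $w=0$ with $\omega(s)\to0$ as $s\to0$. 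Establishing this estimate in the mixed $(E_\eta,E_\xi)$-topology, rather than the naive $E_1$-topology given directly by \eqref{a2a}, is exactly where \eqref{a2b} is needed.

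Next I would set up the spectral decomposition induced by \eqref{a2c}. Since $\bA\in\mathcal{H}(E_1,E_0)$ generates an analytic semigroup, its spectrum lies in a left half-sector, so $\sigma_+(\bA)$ is a bounded spectral set separated from $\sigma(\bA)\setminus\sigma_+(\bA)\subset\{\mathrm{Re}\,\lambda\le0\}$ by the strip $0<\mathrm{Re}\,\lambda<\inf\{\mathrm{Re}\,\lambda:\lambda\in\sigma_+(\bA)\}$. Let $P$ be the associated Riesz projection and $X_+:=P E_0$, $X_-:=(I-P)E_0$. Then $P$ commutes with the semigroup and is bounded on each interpolation space $E_\theta$, the range $X_+$ is contained in $E_1$ with $\bA|_{X_+}$ bounded, and hence all the norms $\|\cdot\|_\theta$ are \emph{equivalent} on $X_+$. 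The semigroup restricts to a group on $X_+$, which gives the exponential lower bound $\|e^{t\bA}Px\|_\alpha\ge M^{-1}e^{\varrho t}\|Px\|_\alpha$ for $t\ge0$ and any fixed $\varrho\in(0,\inf\mathrm{Re}\,\sigma_+(\bA))$, while on $X_-$ the usual analytic decay and smoothing estimates hold.

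With these ingredients I would run the classical instability argument. Arguing by contradiction, suppose $v_*$ were stable in $E_\alpha$; then for every $\ve>0$ there is $\delta>0$ such that any solution with $\|v^0-v_*\|_\alpha<\delta$ satisfies $\|v(t)-v_*\|_\alpha<\ve$ for all $t>0$. Choosing $\ve$ small makes the remainder negligible, $\|G(w(t))\|_0=o(\|w(t)\|)$ uniformly in $t$, and inserting this into the projected variation-of-constants identity $Pw(t)=e^{t\bA}Pw^0+\int_0^t e^{(t-s)\bA}P G(w(s))\,\rd s$ together with the exponential lower bound forces, via a Gronwall bootstrap on $e^{-\varrho t}\|Pw(t)\|_\alpha$, the unstable component $\|Pw(t)\|_\alpha$ to grow until it reaches a fixed size $r_0$ independent of $\delta$. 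Since $\|w(t)\|_\alpha\ge\|P\|^{-1}\|Pw(t)\|_\alpha$, this contradicts confinement to the $\ve$-ball once $\ve<\|P\|^{-1}r_0$. Hence $v_*$ is unstable, and the required sequence is obtained by taking $v_n^0:=v_*+\delta_n\phi$ for a fixed unit vector $\phi\in X_+$ and $\delta_n\searrow0$: because all $E_\theta$-norms coincide up to constants on $X_+$, these points lie in $\bB_{E_\alpha}(v_*,1/n)\cap O_\alpha$, while their solutions leave the fixed neighbourhood $U$.

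I expect the main obstacle to be reconciling the present regime $\alpha<\xi$ with the framework of \cite{MW_MOFM20}, which is formulated for $\alpha>\xi$. The key point is that instability is detected \emph{entirely} through the unstable component $Pw$, which lives in the smooth subspace $X_+\subset E_1$ on which all interpolation norms are equivalent; the time-weight and the reduced regularity of $f$ enter only through the remainder $G$ and the stable part $X_-$, and near the equilibrium these are dominated by the exponential growth on $X_+$. Verifying that the well-posedness and instantaneous smoothing of Theorem~\ref{T0} (resp.\ Theorem~\ref{T0'}) render the variation-of-constants representation and the Gronwall bootstrap rigorous in the time-weighted spaces, so that the proof of \cite{MW_MOFM20} transfers essentially verbatim, is the remaining technical content.
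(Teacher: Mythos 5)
Your overall strategy (spectral projection onto the unstable spectral set, variation of constants, Gronwall bootstrap on the unstable component) is a legitimate classical route, but it has a genuine gap at its central estimate, and it is not the route the paper takes. The paper's proof is a reduction to \cite[Theorem~1.4]{MW_MOFM20}, which in turn rests on Lunardi's instability theorem for \emph{fully nonlinear} parabolic problems \cite[Theorem~9.1.3]{L95}: instability is first proved in $E_1$, and then transferred to $E_\alpha$ by parabolic smoothing (confinement in a small $E_\alpha$-ball would, after a short time, force confinement in a small $E_1$-ball, contradicting $E_1$-instability). The reason the paper goes through the fully nonlinear machinery is exactly the point where your argument breaks down.

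The gap is your claimed remainder estimate $\|G(w)\|_0\le c\,\omega(\|w\|_\xi)\,\|w\|_\eta$ with $\eta<1$. Writing out $G$ using the equilibrium identity \eqref{AS3}, one gets
\[
G(w)=\big[A(v_*+w)-A(v_*)\big]v_*-(\p A(v_*)[w])v_*+\big[A(v_*+w)-A(v_*)\big]w+\big[f(v_*+w)-f(v_*)-\p f(v_*)w\big]\,.
\]
The first two terms are controlled superlinearly via \eqref{a2a}--\eqref{a2b}, and the last via \eqref{a2a}. But the quasilinear cross term $[A(v_*+w)-A(v_*)]w$ involves $A(\cdot)\in\kL(E_1,E_0)$ applied to $w$ itself; the best available bound is $c\,\|w\|_\beta\,\|w\|_1$, and no hypothesis lets you replace $\|w\|_1$ by $\|w\|_\eta$ with $\eta<1$ (assumption \eqref{a2b} concerns only the derivative acting on the fixed element $v_*$). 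So the perturbation is quadratic only with respect to the $E_1$-norm: the problem is effectively fully nonlinear near $v_*$, and a Gronwall bootstrap run in $E_\alpha$ (or any intermediate topology) does not close, because confinement of $\|w(t)\|_\alpha$ gives no a priori control of $\|w(t)\|_1$ without first invoking uniform smoothing estimates for the very solutions whose behaviour is in question. Relatedly, your step ``choosing $\ve$ small makes $\|G(w(t))\|_0=o(\|w(t)\|)$ uniformly in $t$'' presupposes smallness in $E_\xi$ or $E_1$, which does not follow from smallness in $E_\alpha$ since $E_\xi\subsetneq E_\alpha$ here. These are precisely the difficulties that the paper's citation of \cite[Theorem~9.1.3]{L95} (instability in $E_1$) plus the smoothing/transfer argument of \cite[Theorem~1.4]{MW_MOFM20} is designed to absorb; your last paragraph correctly identifies this as ``the remaining technical content,'' but that content is the actual proof, not a routine verification.
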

The proof of this result is similar to the one in the  classical case $\xi= \beta$, see \cite[Theorem 1.4]{MW_MOFM20}, and is based on an instability result for fully-nonlinear parabolic problems established in \cite[Theorem 9.1.3]{L95}, where instability in $E_1$ is proven.  
\medskip

\subsection*{Outline} The outline of the paper is as follows. 
Section~\ref{Sec:2} is dedicated to the quasilinear problem~\eqref{EE}, where we provide  a proof of Theorem~\ref{T1} and establish the instability result stated in Theorem~\ref{T3}.
In Section~\ref{Sec:3} we  prove Theorem~\ref{Tsemi} for the semilinear problem~\eqref{SEP}.
In Section~\ref{Sec:5} we present examples that illustrate our previous findings both for the critical case  $\alpha= \alpha_{\rm  crit}$ and the noncritical case~$\alpha> \alpha_{\rm  crit}$, respectively,
including quasilinear problems with quadratic semilinearities, a parabolic-parabolic chemotaxis  system, and a quasilinear evolution equation in critical spaces with scaling invariance.

\section{Quasilinear Problem: Proof of Theorem~\ref{T1}}\label{Sec:2}

We shall use the framework of time-weighted spaces,  which is well suited to capturing the behavior of  solutions to~\eqref{EE} (and~\eqref{SEP}) near $0$.
Recall that, given a Banach space $E$,  $\mu\in\R$, and~${T>0}$, the space
\[
 C_\mu\big((0,T],E\big):=\big\{u\in C((0,T], E)\,:\, \text{$t^\mu  \|u(t)\|_E\to 0$ for $t\to0$}\big\} 
 \]
 is a Banach space with norm
\[
\|u\|_{C_\mu((0,T],E)}:=\sup_{t\in(0,T]}t^\mu\|u(t)\|_E\,. 
\]
Given $\omega>0$ and $\kappa\ge 1$,  we denote by~${\mathcal{H}(E_1,E_0;\kappa,\omega)}$ the class of all operators $\mathcal{A}\in\mathcal{L}(E_1,E_0)$ with the property
that $\omega-\mathcal{A}$ is an
isomorphism from $E_1$ onto~$E_0$ and
$$
\frac{1}{\kappa}\,\le\,\frac{\|(\mu-\mathcal{A})z\|_{0}}{\vert\mu\vert \,\| z\|_{0}+\|z\|_{1}}\,\le \, \kappa\ ,\qquad {\rm Re\,}
\mu\ge \omega\ ,\quad z\in E_1\setminus\{0\}\,.
$$
 Then $$\mathcal{H}(E_1,E_0)=\bigcup_{\omega>0\,,\,\kappa\ge 1} \mathcal{H}(E_1,E_0;\kappa,\omega)\,,$$
where $\mathcal{H}(E_1,E_0)$ is the set of generators of analytic semigroups on $E_0$ with domain $E_1$ (equipped with the graph norm).

For the proof of Theorem~\ref{T1} we need to recall the following local well-posedness result established in \cite{MW_PRSE,MRW25} 
(that, in turn,  serves as a basis for Theorem~\ref{T0}). In particular, it provides locally uniform estimates with respect to the initial value.

\begin{prop}\label{FF}
Assume~\eqref{AS1} and \eqref{AS2y}. 
Then, given $\ov v\in O_\alpha$, there exist constants~$r=r(\ov v)>0$ and $T=T(\ov v)>0$  such that for each $v^0\in\bar{\mathbb{B}}_{E_\alpha}(\ov v,r)\subset O_\alpha$ the Cauchy problem~\eqref{EE}  possesses a unique solution
\begin{equation}\label{F}
\begin{aligned}
v(\cdot;v^0)&\in  C\big((0,T],E_1\big)\cap C^1\big((0,T],E_0\big)\cap C\big([0,T],O_\alpha\big)\cap C_{\xi-\alpha}\big((0,T],E_\xi\big)
 \cap  C^{\alpha-\beta}\big([0,T],E_\beta\big)\,.
\end{aligned}
\end{equation}
In fact, there exists a constant $c=c(\ov v)>0$ such that, for $v^0,\, v^1\in \bar{\mathbb{B}}_{E_\alpha}(\ov v,r)$, it holds that
\begin{equation}\label{F1}
 \|v(\cdot;v^0)-v(\cdot;  v^1)\|_{C([0,T],E_\alpha)} + \|v(\cdot;v^0)-v(\cdot;  v^1)\|_{C_{\xi-\alpha}((0,T],E_\xi)}\leq  c\|v^0- v^1\|_\alpha\,.
\end{equation}
Moreover, there are constants $\omega>0$, $\kappa\ge 1$, $L>0$, and $\rho\in(0,\alpha-\beta)$ such that, for $v^0,\, v^1\in \bar{\mathbb{B}}_{E_\alpha}(\ov v,r)$,
\begin{equation}\label{Ff2}
 A(v(t;v^0))\in\mathcal{H}(E_1,E_0;\kappa,\omega)\,,\qquad t\in [0,T]\,,
\end{equation}
and
\begin{equation}\label{Ff2a}
 \|A(v(t;v^0))-A(v(s;v^0))\|_{\mathcal{L}(E_1,E_0)}\leq L(t-s)^\rho \,,\qquad 0\leq s\leq t\leq T \,,
\end{equation}
while
\begin{equation}\label{F2}
 \|A(v(\cdot;v^0))-A(v(\cdot;  v^1))\|_{C([0,T],\mathcal{L}(E_1,E_0))}\leq c\|v^0- v^1\|_\alpha\,.
\end{equation}
\end{prop}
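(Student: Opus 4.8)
The plan is to realise the solution as the fixed point of the variation-of-constants map associated with \eqref{EE} inside a ball of the time-weighted space
\[
\mathbb{E}_T := C\big([0,T],E_\alpha\big)\cap C_{\xi-\alpha}\big((0,T],E_\xi\big)\cap C^{\alpha-\beta}\big([0,T],E_\beta\big),
\]
and then to extract the additional regularity in \eqref{F} and the uniform-in-$v^0$ bounds \eqref{F1}--\eqref{F2} directly from the contraction estimates. Fix $\ov v\in O_\alpha$. Since $E_\alpha\hookrightarrow E_\beta$ and $A$ is locally Lipschitz from $O_\beta$ into $\mathcal{H}(E_1,E_0)$, the value $A(\ov v)$ lies in some class $\mathcal{H}(E_1,E_0;\kappa,\omega)$, and by openness there is $r_0>0$ with $A(w)\in\mathcal{H}(E_1,E_0;2\kappa,\omega)$ for all $w\in\ov{\mathbb B}_{E_\beta}(\ov v,r_0)\subset O_\beta$. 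For $u$ in a ball of $\mathbb{E}_T$ whose elements satisfy $u(0)=v^0$ and stay within $r_0$ of $\ov v$ in $E_\beta$, the map $t\mapsto A(u(t))$ is then Hölder continuous (thanks to the $C^{\alpha-\beta}([0,T],E_\beta)$ component) with values in this fixed Kato class, so the parabolic evolution operator $U_{A(u)}(t,s)$ is well-defined by the Sobolevskii--Tanabe construction and I may set
\[
\Phi(u)(t):=U_{A(u)}(t,0)\,v^0+\int_0^t U_{A(u)}(t,\tau)\,f(u(\tau))\,\rd\tau .
\]
A fixed point of $\Phi$ is precisely a solution of \eqref{EE}.

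The heart of the matter is a family of smoothing estimates for $U_{A(u)}$, uniform over the ball. From membership in the fixed class $\mathcal{H}(E_1,E_0;2\kappa,\omega)$ together with interpolation one obtains, for $0\le\theta_1\le\theta_2\le1$,
\[
\|U_{A(u)}(t,s)\|_{\kL(E_{\theta_1},E_{\theta_2})}\le M\,(t-s)^{-(\theta_2-\theta_1)}e^{-\omega(t-s)},\qquad 0\le s<t\le T,
\]
and, crucially, the vanishing property $t^{\xi-\alpha}\|U_{A(u)}(t,0)v^0\|_\xi\to0$ as $t\to0$, which is exactly where the interpolation identity $E_\alpha\doteq\{E_0,E_\xi\}_{\alpha/\xi}$ of \eqref{AS2y} enters (as in the proof of Theorem~\ref{T0}). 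Inserting the growth bound \eqref{AS2x} for $f$ — which controls $\|f(u(\tau))\|_\gamma$ by powers of $\|u(\tau)\|_\xi\le\tau^{-(\xi-\alpha)}\|u\|_{C_{\xi-\alpha}((0,T],E_\xi)}$, the two-term structure of \eqref{AS2x} ensuring that the dangerous factor $\|u\|_\xi^{q-1}$ is paired with the unweighted $E_\beta$-contributions — reduces the Duhamel term to convolution integrals modelled on
\[
\int_0^t (t-\tau)^{-(\xi-\gamma)}\tau^{-q(\xi-\alpha)}\,\rd\tau .
\]
Because $\alpha\ge\alpha_{\rm crit}$ and $\alpha>\beta>\gamma$ one has $q(\xi-\alpha)<1$, so this Beta-type integral converges; its weighted version $t^{\xi-\alpha}\int_0^t(\cdots)\,\rd\tau$ then scales like $t^{\,1+\gamma-q\xi+(q-1)\alpha}$, whose exponent is nonnegative \emph{precisely} when $\alpha\ge\alpha_{\rm crit}$, carrying a strictly positive power of $t$ for $\alpha>\alpha_{\rm crit}$ and being scale-invariant for $\alpha=\alpha_{\rm crit}$. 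This yields the self-mapping property of $\Phi$ on a ball of suitably small radius and $T$.

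The contraction estimate additionally requires controlling the difference of two evolution operators through the difference of their generators. Writing $U_{A(u_1)}-U_{A(u_2)}$ by the standard integral identity and using the local Lipschitz continuity of $A$ together with $E_\alpha\hookrightarrow E_\beta$ to bound $\|A(u_1(\tau))-A(u_2(\tau))\|_{\kL(E_1,E_0)}\lesssim\|u_1(\tau)-u_2(\tau)\|_\alpha$, combined with the Lipschitz form of \eqref{AS2x} for $f$, shows $\Phi$ to be a contraction. The main obstacle is the critical case $\alpha=\alpha_{\rm crit}$ (with $q>1$): here the convolution is scale-invariant, so shrinking $T$ alone produces no smallness. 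Instead one works in a ball of small radius $\delta$ in the $C_{\xi-\alpha}$-norm, so that the Duhamel contribution carries a small prefactor $\delta^{q-1}$, and then exploits the vanishing property $\lim_{t\to0}t^{\xi-\alpha}\|U_{A(u)}(t,0)v^0\|_\xi=0$ — which forces the free-evolution term to have $C_{\xi-\alpha}$-norm below $\delta$ once $T$ is small — so that the $\delta$-ball is invariant. This is the precise point at which \eqref{AS2y} is indispensable; for $\alpha>\alpha_{\rm crit}$ (and in particular for $q=1$) the positive power of $T$ supplies the smallness directly. Banach's fixed point theorem now gives the unique $v(\cdot;v^0)\in\mathbb{E}_T$.

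It remains to upgrade the regularity and read off the auxiliary estimates. Standard parabolic theory for the evolution operator generated by a Hölder-continuous, analytic family yields $v\in C^1((0,T],E_0)\cap C((0,T],E_1)$, completing \eqref{F}, while membership in $C_{\xi-\alpha}$ is the limit asserted in \eqref{solb}. Since every constant in the contraction argument can be chosen uniformly for $v^0$ ranging over a fixed ball $\ov{\mathbb B}_{E_\alpha}(\ov v,r)$, subtracting the fixed-point identities for $v^0$ and $v^1$, splitting the free-evolution difference into $U_{A(v(\cdot;v^0))}(t,0)(v^0-v^1)$ and a term controlled by $\|A(v(\cdot;v^0))-A(v(\cdot;v^1))\|$, and absorbing the contraction part gives the Lipschitz bound \eqref{F1}. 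The operator estimates then follow: \eqref{Ff2} holds because $v(t;v^0)$ remains within $r_0$ of $\ov v$ in $E_\beta$, so $A(v(t;v^0))$ stays in the fixed class $\mathcal{H}(E_1,E_0;2\kappa,\omega)$; \eqref{Ff2a} follows by composing the Lipschitz continuity of $A$ on $O_\beta$ with the $E_\beta$-Hölder continuity $v\in C^{\alpha-\beta}([0,T],E_\beta)$ (taking any $\rho\in(0,\alpha-\beta)$); and \eqref{F2} is immediate from \eqref{F1} via $E_\alpha\hookrightarrow E_\beta$ and the Lipschitz continuity of $A$.
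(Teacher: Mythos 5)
Your proposal is correct and follows essentially the same route as the proofs the paper relies on: the paper's own argument for Proposition~\ref{FF} is a citation to \cite[Proposition~2.1]{MW_PRSE} and \cite[Proposition~2.1]{MRW25}, and those results are obtained precisely by the contraction argument you describe — a fixed point of the variation-of-constants map in the time-weighted space $C([0,T],E_\alpha)\cap C_{\xi-\alpha}((0,T],E_\xi)\cap C^{\alpha-\beta}([0,T],E_\beta)$, with the loss-free evolution-operator estimates and the vanishing of $t^{\xi-\alpha}\|U(t,0)v^0\|_\xi$ supplied by \eqref{AS2y}, and with smallness coming from a positive power of $T$ in the subcritical case and from a small $C_{\xi-\alpha}$-ball in the critical case. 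You correctly identify the role of $\alpha\ge\alpha_{\rm crit}$ in the Beta-integral exponent $1+\gamma-q\xi+(q-1)\alpha\ge 0$ and the derivation of \eqref{Ff2}--\eqref{F2} from the contraction constants, so no genuine gap remains.
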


\begin{proof}
This is established in \cite[Proposition~2.1]{MW_PRSE} for $\alpha_{\rm crit}<\alpha$ and in  \cite[Proposition~2.1]{MRW25} for $\alpha_{\rm  crit}=\alpha$, 
with the observation that in the noncritical case $\alpha_{\rm crit}<\alpha$ the regularity~\eqref{F} follows in view of \eqref{AS2y} by arguing as in the proof of Theorem~\ref{T0} above.
 In fact, \eqref{F1} is derived in the last step of the proof of \cite[Proposition~2.1]{MRW25} (see Eq.~(2.36)), respectively, in the noncritical case, \eqref{F1} follows from the estimates in the last part of the proof of \cite[Proposition~2.1]{MW_PRSE}, which can be sharpened using~\eqref{AS2y}.
 Moreover,~\eqref{Ff2}-\eqref{Ff2a} are stated in \cite[Eq.(2.8)]{MW_PRSE} respectively in \cite[Eq.(2.6)]{MRW25}. Finally, \eqref{F2} is a consequence of \eqref{a1b},~\eqref{F1}, 
 the embedding $E_\alpha\hookrightarrow E_\beta$, and the construction of the solutions (see \cite[Eq.(2.5)]{MW_PRSE} and \cite[Eq.(2.4)]{MRW25}).
\end{proof}

The arguments  that lead to~\eqref{F1} can be further exploited  in order to  derive related estimates in time-weighted spaces~$C_\mu((0,T], E_\zeta)$, with~$\zeta \in (\xi,1)$ and an appropriate exponent~$\mu>0$.

\begin{cor}\label{FFF}
Assume~\eqref{AS1} and \eqref{AS2y}. Given $\ov v\in O_\alpha$, let ~$r=r(\ov v)>0$ and $T=T(\ov v)>0  $ be the constants from Proposition~\ref{FF}
and chose $\zeta\in(\xi,1)$ and $\eta\in(0,\gamma)$.
Then, there is a constant $c_1=c_1(\ov v)>0$ such that, for $v^0,\, v^1\in \bar{\mathbb{B}}_{E_\alpha}(\ov v,r)$,
\begin{equation}\label{FFF1}
\|v(\cdot;v^0)-v(\cdot;  v^1)\|_{C_{\zeta-\alpha+\eta}((0,T],E_\zeta)}\leq c_1 \|v^0- v^1\|_\alpha \,.
\end{equation}
\end{cor}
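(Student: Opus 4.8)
The plan is to bootstrap the Lipschitz estimate~\eqref{F1} from $E_\xi$ to $E_\zeta$ by means of the variation-of-constants formula satisfied by the two solutions $v_i:=v(\cdot;v^i)$, $i=0,1$. Writing $A_i:=A(v_i(\cdot))$ and letting $U_i:=U_{A_i}$ denote the associated parabolic evolution operator, one has $v_i(t)=U_i(t,0)v^i+\int_0^t U_i(t,\tau)f(v_i(\tau))\,\rd\tau$, whence
\begin{align*}
v_0(t)-v_1(t) &= U_0(t,0)(v^0-v^1)+\big[U_0(t,0)-U_1(t,0)\big]v^1\\
&\quad + \int_0^t U_0(t,\tau)\big[f(v_0(\tau))-f(v_1(\tau))\big]\,\rd\tau + \int_0^t \big[U_0(t,\tau)-U_1(t,\tau)\big]f(v_1(\tau))\,\rd\tau\,.
\end{align*}
I shall estimate the four terms in $E_\zeta$ separately and then multiply by the weight $t^{\zeta-\alpha+\eta}$. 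The tools are the standard parabolic smoothing estimates $\|U_i(t,\tau)\|_{\kL(E_\sigma,E_\rho)}\le C(t-\tau)^{-(\rho-\sigma)}$, valid for $0\le\sigma\le\rho\le1$ with $\rho-\sigma<1$ — available since, by~\eqref{Ff2}--\eqref{Ff2a}, the operators $A_i(t)$ lie in a fixed class $\mathcal{H}(E_1,E_0;\kappa,\omega)$ and are uniformly H\"older continuous in time — together with the difference identity $U_0(t,\tau)-U_1(t,\tau)=\int_\tau^t U_0(t,s)[A_0(s)-A_1(s)]U_1(s,\tau)\,\rd s$ and the bound $\|A_0(s)-A_1(s)\|_{\kL(E_1,E_0)}\le c\|v^0-v^1\|_\alpha$ from~\eqref{F2}.

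The second ingredient is the control of $f$ along the solutions. Using~\eqref{AS2x} together with $\|v_i(\tau)\|_\xi\le C\tau^{-(\xi-\alpha)}$, $\|v_0(\tau)-v_1(\tau)\|_\xi\le C\tau^{-(\xi-\alpha)}\|v^0-v^1\|_\alpha$ from~\eqref{F1}, and $\|v_0(\tau)-v_1(\tau)\|_\beta\le C\|v^0-v^1\|_\alpha$ (the latter from $E_\alpha\hookrightarrow E_\beta$ and the $C([0,T],E_\alpha)$-part of~\eqref{F1}), the most singular contribution is $\tau^{-(q-1)(\xi-\alpha)}\cdot\tau^{-(\xi-\alpha)}$, so that $\|f(v_0(\tau))-f(v_1(\tau))\|_\gamma\le C(1+\tau^{-q(\xi-\alpha)})\|v^0-v^1\|_\alpha$ for $\tau\in(0,T]$. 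Writing $f(v_1(\tau))=[f(v_1(\tau))-f(v_1(\tau_0))]+f(v_1(\tau_0))$ for a fixed $\tau_0\in(0,T]$ and applying~\eqref{AS2x} once more yields $\|f(v_1(\tau))\|_\gamma\le C(1+\tau^{-q(\xi-\alpha)})$, with constants uniform over $\bar{\mathbb{B}}_{E_\alpha}(\ov v,r)$ thanks to the uniformity in Proposition~\ref{FF}.

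Inserting these bounds, the four terms reduce to Beta-type integrals. The first gives $\|U_0(t,0)(v^0-v^1)\|_\zeta\le Ct^{-(\zeta-\alpha)}\|v^0-v^1\|_\alpha$; the second, via the difference identity with $\|U_1(s,0)v^1\|_1\le Cs^{-(1-\alpha)}$, gives $Ct^{\alpha-\zeta}\|v^0-v^1\|_\alpha$; and the two integral terms both produce $\int_0^t(t-\tau)^{-(\zeta-\gamma)}(1+\tau^{-q(\xi-\alpha)})\,\rd\tau$, whose most singular part equals $Ct^{1-(\zeta-\gamma)-q(\xi-\alpha)}$. After multiplication by $t^{\zeta-\alpha+\eta}$ the residual powers of $t$ are $t^{\eta}$ for the first two terms and $t^{1+\gamma-\alpha-q(\xi-\alpha)+\eta}$ for the last two. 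Here the definition~\eqref{X} and the hypothesis $\alpha\ge\alpha_{\rm crit}$ enter decisively: they are equivalent to $q(\xi-\alpha)\le1+\gamma-\alpha$, which both keeps every Beta integral convergent (note $q(\xi-\alpha)<1$ since $\alpha>\beta>\gamma$, and $\zeta-\gamma<1$) and guarantees $1+\gamma-\alpha-q(\xi-\alpha)\ge0$, so that each residual power is $\ge\eta>0$. Taking the supremum over $t\in(0,T]$ yields~\eqref{FFF1}, and the strict positivity of these powers shows in addition that the weighted $E_\zeta$-norm of $v_0-v_1$ tends to $0$ as $t\to0$, whence the difference indeed lies in $C_{\zeta-\alpha+\eta}((0,T],E_\zeta)$.

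The main obstacle is the exponent bookkeeping in the critical case $\alpha=\alpha_{\rm crit}$, where $1+\gamma-\alpha-q(\xi-\alpha)=0$ exactly and the contributions of the nonlinearity become borderline; it is precisely the extra room provided by $\eta>0$ (we take $\eta\in(0,\gamma)$) that upgrades the mere boundedness of the weighted norm into the decay required for membership in the time-weighted space. The continuity of $t\mapsto(v_0-v_1)(t)$ in $E_\zeta$ on $(0,T]$ then follows from the representation above and the continuity properties of the evolution operators and of $f$ along the solutions.
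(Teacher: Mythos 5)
Your argument is correct and follows essentially the same route as the paper's proof: the identical four-term variation-of-constants decomposition, the smoothing and difference estimates for the evolution operators obtained from \eqref{Ff2}--\eqref{Ff2a}, \eqref{F2} and \cite[II.Lemmas~5.1.3--5.1.4]{LQPP}, the bounds $\|f(v(\tau;v^i))\|_\gamma\le c\,\tau^{-q(\xi-\alpha)}$ and $\|f(v(\tau;v^0))-f(v(\tau;v^1))\|_\gamma\le c\,\tau^{-q(\xi-\alpha)}\|v^0-v^1\|_\alpha$ drawn from \eqref{AS2x} and \eqref{F1}, and the Beta-function bookkeeping closed by $1+\gamma-\alpha-q(\xi-\alpha)\ge 0$. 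The only (inessential) divergence is that the paper works from the outset with the $\eta$-lossy smoothing estimate $\|U(t,s)\|_{\mathcal{L}(E_\vartheta,E_\zeta)}\le N(t-s)^{-(\zeta-\vartheta)-\eta}$ rather than the exact one you quote --- which is precisely why the weight in the statement is $\zeta-\alpha+\eta$ --- but your exponent count lands in the same place.
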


\begin{proof}
To start, we infer from \eqref{Ff2}-\eqref{Ff2a} and \cite[II.Section~5]{LQPP} for 
 $v^0\in \bar{\mathbb{B}}_{E_\alpha}(\ov v,r)$ that $$A(v(\cdot;v^0))\in C^{\rho}([0,T],\mathcal{H}(E_1,E_0))$$ generates an evolution operator $U_{A(v(\cdot;v^0))}(t,s)$, $0\le s\le t\le T$, satisfying uniform stability estimates. More precisely,
\cite[II.Lemma 5.1.3]{LQPP}  ensures that there is a constant $N=N(\ov v)>0$  such that
\begin{equation}\label{g} 
(t-s)^{\zeta-\vartheta+\eta}\|U_{A(v(\cdot;v^0))}(t,s)\|_{\mathcal{L}(E_\vartheta,E_\zeta)}  \le N \,,\qquad 0\le s< t\le T  \,,
\end{equation}
for  $\vartheta\in \{\gamma,\alpha\}$ and $v^0\in \bar{\mathbb{B}}_{E_\alpha}(\ov v,r)$.
Moreover, we  deduce  from~\eqref{F2} and \cite[II.Lemma~5.1.4]{LQPP}  that
\begin{equation}\label{gg}
(t-s)^{\zeta-\vartheta}\|U_{A(v(\cdot;v^0))}(t,s)-U_{A(v(\cdot;v^1))}(t,s)\|_{\mathcal{L}(E_\vartheta,E_\zeta)}\le N \|v^0- v^1\|_\alpha\,,\quad 0\le s<t\le T\,,
\end{equation}
for  $\vartheta\in \{\gamma,\alpha\}$ and $v^0,\, v^1\in \bar{\mathbb{B}}_{E_\alpha}(\ov v,r)$.  
Invoking also~\eqref{AS2x} and~\eqref{F1} we find a constant $c_2=c_2(\ov v)>0$ such that
\begin{equation}\label{ggg0}
\|f(v(t;v^0))\|_{\gamma}\le c_2 t^{-q(\xi-\alpha)}\,,\quad 0<t\le T\,,
\end{equation}
and
\begin{equation}\label{ggg}
\|f(v(t;v^0))-f(v(t;v^1))\|_{\gamma}\le c_2 t^{-q(\xi-\alpha)}\|v^0- v^1\|_\alpha\,,\quad 0<t\le T\,.
\end{equation}
Consequently, since the unique solution to \eqref{EE} satisfies the variation-of-constant formula, i.e.
$$
v(t;v^0)=U_{A(v(\cdot;v^0))}(t,0)v^0+\int_0^t U_{A(v(\cdot;v^0))}(t,s)f(v(s;v^0))\,\rd s\,,\qquad t\in [0,T]\,,
$$
we deduce from \eqref{g}-\eqref{ggg} for $t\in (0,T]$   and $v^0,\, v^1\in \bar{\mathbb{B}}_{E_\alpha}(\ov v,r)$  that
\begin{align*}
\|v(t;v^0)-v(t;v^1)\|_\zeta&\leq
\|U_{A(v(\cdot;v^0))}(t,0)-U_{A(v(\cdot;v^1))}(t,0)\|_{\mathcal{L}(E_\alpha,E_\zeta)}\,\|v^0\|_\alpha\\
&\qquad +\|U_{A(v(\cdot;v^1))}(t,0)\|_{\mathcal{L}(E_\alpha,E_\zeta)}\,\|v^0-v^1\|_{\alpha} \\
&\qquad +\int_0^t \big\|U_{A(v(\cdot;v^0))}(t,s)-U_{A(v(\cdot;v^1))}(t,s)\big\|_{\mathcal{L}(E_\gamma,E_\zeta)}\, \|f(v(s;v^0))\|_\gamma\, \rd s\\
&\qquad+\int_0^t \big\|U_{A(v(\cdot;v^1))}(t,s)\big\|_{\mathcal{L}(E_\gamma,E_\zeta)}\, \|f(v(s;v^0))-f(v(s;v^1))\|_\gamma\,\rd s\\
&\le c t^{\alpha-\zeta-\eta}\|v^0- v^1\|_\alpha+c \int_0^t (t-s)^{\gamma-\zeta-\eta} s^{-q(\xi-\alpha)}\,\rd s\, \|v^0- v^1\|_\alpha\\
&= c \big[t^{\alpha-\zeta -\eta}+t^{1+\gamma-\zeta- \eta-q(\xi-\alpha)}\mathsf{B}(1+\gamma-\zeta- \eta,1-q(\xi-\alpha))\big] \|v^0- v^1\|_\alpha\,,
\end{align*}
with $\mathsf{B}$ denoting the Beta function. Recalling that by \eqref{X}-\eqref{XX} we have 
$$ 1+\gamma-\alpha-q(\xi-\alpha)\geq 0\,,$$ 
the assertion follows.
\end{proof}

\subsection*{Proof of Theorem~\ref{T1}}  Let the assumptions of Theorem~\ref{T1} be satisfied and fix $ 0<\omega<\bar\omega<\omega_0$. Since
\begin{equation*}
(A,f)\in C^{1-}\big(O_\xi,\mathcal{H}(E_1,E_0)\times E_\gamma\big)
\end{equation*}
and $\zeta\in (\xi,1)$, it follows from~\eqref{AS4} and \cite[Theorem~1.3]{MW_MOFM20} that there  exist $\ve_1>0$ and~$N\ge 1$ such that, 
 for each $w^0\in \ov\bB_{E_\zeta}(v_*,\ve_1)$, the solution to \eqref{EE} with initial value $v(0)=w^0$ exists globally in time and
\begin{equation}\label{stablexx}
\|v(t;w^0)-v_*\|_\zeta\le N e^{-\bar\omega t}\|w^0-v_*\|_\zeta\,,\qquad t\ge 0\,.
\end{equation}
Let $T(v_*)> 0$ and $r(v_*)>0$ be the constants from Proposition~\ref{FF} and fix an arbitrary $t_*\in (0,T(v_*))$.
 Then Corollary~\ref{FFF} implies that there exists $\ve_0\in (0,r(v_*))$  and a constant $c>0$ such that for each initial value $v^0\in \ov\bB_{E_\alpha}(v_*,\ve_0)$ we have $w^0:=v(t_*;v^0)\in \ov\bB_{E_\zeta}(v_*,\ve_1)$ and
\begin{equation}\label{stablex}
\|v(t_*;v^0)-v_*\|_\zeta\le c\|v^0-v_*\|_\alpha\,.
\end{equation}
Hence, for such  $v^0\in \ov\bB_{E_\alpha}(v_*,\ve_0)$,  it follows from $v(t;v^0)=v(t-t_*;v(t_*;v^0))$ for $t\in [t_*,t^+(v^0))$ (by uniqueness) and $t^+(v(t_*;v^0))=\infty$ that $t^+(v^0)=\infty$ and,  using~\eqref{stablexx}-\eqref{stablex},
\begin{align}
\|v(t;v^0)-v_*\|_\alpha&\le c \|v(t;v^0)-v_*\|_\xi  \le c \|v(t-t_*;v(t_*;v^0))-v_*\|_\zeta\nonumber\\
&\le cN e^{-\bar\omega (t-t_*)}\|v(t_*;v^0)-v_*\|_\alpha
\le c e^{-\bar\omega t}\|v^0-v_*\|_\alpha \label{df}
\end{align}
for $t\ge t_*$. 
Recalling that $\omega<\bar\omega$, we conclude from~\eqref{F1}  and~\eqref{df} that there exists    $M>0$  such that 
\begin{equation*}
\|v(t;v^0)-v_*\|_\alpha +t^{\xi-\alpha}\|v(t;v^0)-v_*\|_\xi\le M e^{-\omega t}\|v^0-v_*\|_\alpha\,,\qquad t> 0\,,
\end{equation*}
for each $v^0\in \ov\bB_{E_\alpha}(v_*,\ve_0)$, which proves Theorem~\ref{T1}.

\medskip

 Concerning the instability result Theorem~\ref{T3}, we note the following:

\subsection*{Proof of Theorem~\ref{T3}}
The proof is identically to that of \cite[Theorem 1.4]{MW_MOFM20}.
\qed

\section{Semilinear Problem: Proof of Theorem~\ref{Tsemi}}\label{Sec:3}

 This section is devoted to the proof of Theorem~\ref{Tsemi}. Under the assumption (a) of this theorem, the statement follows along the same lines as the proof of Theorem~\ref{T1}.
As a preliminary result, we therefore establish in Proposition~\ref{FFsemi} the counterpart of Proposition~\ref{FF} and Corollary~\ref{FFF} in the semilinear case.
 
 \begin{prop}\label{FFsemi}
Assume~\eqref{Vor}   with $\xi< 1$  and \eqref{Vorcc}.
Then, given $\ov v\in O_\alpha$, there exist~${r=r(\ov v)>0}$ and~$T=T(\ov v)>0$  such that, for each $v^0\in\bar{\mathbb{B}}_{E_\alpha}(\ov v,r)\subset O_\alpha$, the Cauchy problem~\eqref{SEP}  possesses a unique solution
\begin{equation}\label{Fsemi}
\begin{aligned}
v(\cdot;v^0)&\in  C\big((0,T],E_1\big)\cap C^1\big((0,T],E_0\big)\cap C\big([0,T],O_\alpha\big)\cap C_{\xi-\alpha}\big((0,T],E_\xi\big)\,.
\end{aligned}
\end{equation}
Moreover, there exists a constant $c=c(\ov v)>0$ such that, for $v^0,\, v^1\in \bar{\mathbb{B}}_{E_\alpha}(\ov v,r)$, it  holds that
\begin{equation}\label{F1semi}
 \|v(\cdot;v^0)-v(\cdot;  v^1)\|_{C([0,T],E_\alpha)}+\|v(\cdot;v^0)-v(\cdot;  v^1)\|_{C_{\xi-\alpha}((0,T],E_\xi)}  \leq c\|v^0- v^1\|_\alpha.
\end{equation}
Fixing an arbitrary $\zeta\in(\xi,1)$  it  additionally holds,  for $v^0,\, v^1\in \bar{\mathbb{B}}_{E_\alpha}(\ov v,r)$, that
\begin{equation}\label{F1semi2}
 \|v(\cdot;v^0)-v(\cdot;  v^1)\|_{C_{\zeta}((0,T],E_\zeta)}  \leq c\|v^0- v^1\|_\alpha.
\end{equation}
\end{prop}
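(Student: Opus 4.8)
The plan is to derive the well‑posedness statements (existence, uniqueness, the regularity~\eqref{Fsemi}, and the Lipschitz bound~\eqref{F1semi}) as the specialization of the quasilinear machinery to the \emph{constant} generator $A\in\mathcal{H}(E_1,E_0)$, and then to establish the genuinely new estimate~\eqref{F1semi2} directly from the variation-of-constants formula by a kernel computation.

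First I would dispose of the well-posedness part. Since $A$ is independent of $v$, the evolution operator appearing in Proposition~\ref{FF} reduces to the analytic semigroup $\{e^{tA}\}_{t\ge0}$, and the contraction-mapping construction of~\cite{MRW25} underlying Theorem~\ref{T0'} applies on a common interval $[0,T]$ and a common ball $\bar{\mathbb{B}}_{E_\alpha}(\ov v,r)\subset O_\alpha$, producing both the regularity~\eqref{Fsemi} and the estimate~\eqref{F1semi}; here the interpolation hypothesis~\eqref{Vorcc} is precisely what secures the little-$o$ behaviour built into the $C_{\xi-\alpha}$-norm. Having invoked this, I would concentrate on~\eqref{F1semi2}. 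The semilinear structure makes it shorter than its quasilinear analogue Corollary~\ref{FFF}: because $A$ is fixed there is no difference-of-evolution-operators term, and subtracting the two variation-of-constants formulae leaves only
\[
v(t;v^0)-v(t;v^1)=e^{tA}(v^0-v^1)+\int_0^t e^{(t-s)A}\big[f(v(s;v^0))-f(v(s;v^1))\big]\,\rd s\,.
\]
For the first term the smoothing bound $\|e^{tA}\|_{\mathcal{L}(E_\alpha,E_\zeta)}\le C\,t^{-(\zeta-\alpha)}$ yields $t^\zeta\|e^{tA}(v^0-v^1)\|_\zeta\le C\,t^\alpha\|v^0-v^1\|_\alpha$. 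For the integral the decisive intermediate step is to promote~\eqref{Vor3} to the pointwise bound
\[
\|f(v(s;v^0))-f(v(s;v^1))\|_{\gamma}\le C\,s^{-q(\xi-\alpha)}\,\|v^0-v^1\|_\alpha\,,\qquad 0<s\le T\,,
\]
obtained by inserting into~\eqref{Vor3} the bounds $\|v(s;v^i)\|_\xi\le C\,s^{-(\xi-\alpha)}$ and $\|v(s;v^0)-v(s;v^1)\|_\xi\le C\,s^{-(\xi-\alpha)}\|v^0-v^1\|_\alpha$ from~\eqref{F1semi} and isolating $s^{-q(\xi-\alpha)}$ as the dominant singularity. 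Combining this with $\|e^{(t-s)A}\|_{\mathcal{L}(E_\gamma,E_\zeta)}\le C\,(t-s)^{-(\zeta-\gamma)}$ and a Beta-function integration bounds the integral by $C\,t^{1-(\zeta-\gamma)-q(\xi-\alpha)}\|v^0-v^1\|_\alpha$, so that after multiplication by $t^\zeta$ it contributes $C\,t^{1+\gamma-q(\xi-\alpha)}\|v^0-v^1\|_\alpha$.

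The step demanding the most care is the exponent bookkeeping, where the standing constraint $\alpha\ge\alpha_{\rm crit}$ in~\eqref{XXs} is consumed. Convergence of the Beta integral requires $\zeta-\gamma<1$ (automatic since $\zeta<1\le1+\gamma$) together with $q(\xi-\alpha)<1$, while boundedness of $t^{1+\gamma-q(\xi-\alpha)}$ on $(0,T]$ requires $1+\gamma-q(\xi-\alpha)\ge0$; both follow from~\eqref{Xs}--\eqref{XXs}, which give $q(\xi-\alpha)<1$ and $q(\xi-\alpha)\le1+\gamma-\alpha$, hence $1+\gamma-q(\xi-\alpha)\ge\alpha\ge0$. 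Collecting the two contributions gives $t^\zeta\|v(t;v^0)-v(t;v^1)\|_\zeta\le C\,(t^\alpha+t^{1+\gamma-q(\xi-\alpha)})\|v^0-v^1\|_\alpha$, which is the norm bound in~\eqref{F1semi2}; the little-$o$ property defining $C_\zeta((0,T],E_\zeta)$ then follows from the positivity of the two exponents, using a density argument for the semigroup term when $\alpha=0$ (in which case the noncritical regime forces the integral exponent to be strictly positive).
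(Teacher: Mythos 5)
Your proposal follows essentially the same route as the paper: the well-posedness statements and \eqref{F1semi} are delegated to the semilinear propositions of \cite{MW_PRSE,MRW25}, and \eqref{F1semi2} is obtained from the variation-of-constants formula, the pointwise bound $\|f(v(s;v^0))-f(v(s;v^1))\|_\gamma\le C s^{-q(\xi-\alpha)}\|v^0-v^1\|_\alpha$ coming from \eqref{Vor3} and \eqref{F1semi}, semigroup smoothing, and a Beta-function computation resting on $q(\xi-\alpha)<1$. One technical caveat: the exact smoothing exponents $\|e^{tA}\|_{\mathcal{L}(E_\alpha,E_\zeta)}\le Ct^{-(\zeta-\alpha)}$ and $\|e^{tA}\|_{\mathcal{L}(E_\gamma,E_\zeta)}\le Ct^{-(\zeta-\gamma)}$ are not guaranteed for a general admissible interpolation functor, since the pairs $(\alpha,\zeta)$ and $(\gamma,\zeta)$ are not among those covered by the reiteration hypotheses \eqref{Vorcc}; the paper avoids this by measuring both the initial data and the $f$-differences in $E_0$ and using only the always-valid estimate $\|e^{\tau A}\|_{\mathcal{L}(E_0,E_\zeta)}\le C\tau^{-\zeta}$, which yields the same conclusion because the resulting powers of $t$ (namely $t^0$ and $t^{1-q(\xi-\alpha)}$) are still bounded on $(0,T]$.
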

\begin{proof}
The claims, up to the estimate~\eqref{F1semi2}, are established  in \cite[Proposition~3.1]{MW_PRSE} for $\alpha_{\rm crit} < \alpha$ and in \cite[Proposition~3.1]{MRW25} for $\alpha_{\rm crit} = \alpha$, 
with the observation that in the noncritical case $\alpha_{\rm crit} < \alpha$ the regularity property~\eqref{Fsemi}  
and the estimate~\eqref{F1semi} in~$C_{\xi-\alpha}((0,T],E_\xi)$  are obtained by also exploiting~\eqref{Vorcc} (see the proof of~\eqref{F1semi2} below).

In order to prove~\eqref{F1semi2}, we infer from \eqref{Vor3} and \eqref{F1semi}, that there exists a constant $c=c(\bar v)$ such that for  all $v^0,\, v^1\in \bar{\mathbb{B}}_{E_\alpha}(\ov v,r)$ we have
\begin{equation}\label{gggsemi}
\|f(v(t;v^0))-f(v(t;v^1))\|_{0}\leq \|f(v(t;v^0))-f(v(t;v^1))\|_{\gamma}\le c_2 t^{-q(\xi-\alpha)}\|v^0- v^1\|_\alpha\,,\quad 0<t\le T\,.
\end{equation}
Since solutions to \eqref{SEP} satisfy the variation of constant formula 
$$
v(t;v^0)=e^{tA}v^0+\int_0^t e^{(t-s)A}f(v(s;v^0))\,\rd s\,,\qquad t\in [0,T]\,,
$$
using \eqref{gggsemi} together with  the estimate $\|e^{(t-s)A}\|_{\kL(E_0,E_\zeta)}\leq C(t-s)^{-\zeta}$ for  $0\le s < t\le T$, 
we get for $t\in(0,T]$ and $v^0\in\bar{\mathbb{B}}_{E_\alpha}(\ov v,r)$ that
\begin{align*}
t^\zeta\|v(t;v^0)-v(t;v^1)\|_\zeta&\leq t^\zeta\|e^{tA}\|_{\mathcal{L}(E_0,E_\zeta)}\,\|v^0-v^1\|_{0} \\
&\qquad+t^\zeta\int_0^t \big\|e^{(t-s)A}\big\|_{\mathcal{L}(E_0,E_\zeta)}\, \|f(v(s;v^0))-f(v(s;v^1))\|_0\,\rd s\\
&\le c\|v^0- v^1\|_\alpha+c t^\zeta\int_0^t (t-s)^{-\zeta} s^{-q(\xi-\alpha)}\,\rd s\, \|v^0- v^1\|_\alpha\\
&= c \big[1+t^{1-q(\xi-\alpha)}\mathsf{B}(1-\zeta,1-q(\xi-\alpha))\big] \|v^0- v^1\|_\alpha\,.
\end{align*}
Above, $\mathsf{B}$ denotes again the Beta function and we used the  inequality $1-q(\xi-\alpha)>0$
which is a consequence of  \eqref{Xs}-\eqref{XXs}.
\end{proof}\medskip

 We conclude this section with the proof of Theorem~\ref{Tsemi}.

\subsection*{Proof of Theorem~\ref{Tsemi}} If the assumption  (a) holds, then the  proof follows from Proposition~\ref{FFsemi}, using a line of reasoning similar to that in the proof of Theorem~\ref{T1}.

 We now prove  Theorem~\ref{Tsemi}, assuming (b).
Since in this case $\alpha\geq \alpha_{\rm  crit}^*$, we have
\[
 \mu:=\xi-\alpha\leq \frac{1+\gamma_*-\alpha}{q_*}\,,
\]
the inequality turning into  a strict inequality  if $\alpha> \alpha_{\rm  crit}^*$, that is, if $\gamma_*\in (0, \gamma)$  or $\alpha=\gamma$ by assumption. 
If~${\gamma_*\in (0, \gamma)}$, we may thus choose $\gamma_0\in (0, \gamma_*)$ with $\mu q_*<1+\gamma_0-\alpha$, while we set $\gamma_0=\gamma_*$ for $\gamma_*\in\{0,\gamma\}$.
We then have in any case that
\[
\mu q_*\leq 1+\gamma_0-\alpha\,.
\]
Let  ${R\in(0,r_*)}$, see \eqref{MM4},   be chosen such that $\ov{\mathbb{B}}_{E_\alpha}(v_*,R)\subset O_\alpha$ and  fix
\[
\omega_0> \bar\omega>\omega>0\,.
\] 
 Since  $\mu q_*<1$ and  $\mu q_*\leq 1+\gamma_0-\alpha=1+\gamma_0+\mu-\xi$, the constant 
 \begin{align*}
c_0&:= 1+\big({\sf B}_{\mu q_*}+{\sf B}_{\mu (q_*-1)}\big)\Big(\sup_{r>0} r^{1+\gamma_0-\alpha-\mu q_*}e^{(\omega-\bar\omega)   r}\Big)
\end{align*}
with  $ {\sf B}_\theta :={\sf B}(\theta, 1-\mu q_*)$ for $\theta>0$ ($\mathsf{B}$ denotes again the Beta function), is well-defined.
Assumption \eqref{MM5} together with   \cite[II.Lemma~5.1.3]{LQPP}  
ensure  the existence of a constant~${M\geq 1}$ such that the strongly continuous analytic semigroup  $(e^{t\bA})_{t\geq0}$ generated by~$\bA$ satisfies
\begin{equation}\label{v1as}
\|e^{t\bA}\|_{\mathcal{L}(E_\theta)}+ t^{\theta-\vartheta_0}\|e^{t\bA}\|_{\mathcal{L}(E_\vartheta,E_\theta)}  \le  \frac{M}{4c_0}e^{- \bar\omega t}  \,, \qquad t>0\,,
\end{equation}
for $0\le \vartheta_0\le \vartheta\le \theta\le 1$ with $\vartheta_0<\vartheta$ if $0<\vartheta< \theta< 1$  or, thanks to~\eqref{Vorcc}, 
for~$(\vartheta,\theta)\in\{(\alpha,\xi), (\gamma,\alpha),(\gamma,\xi)\}$ with $\vartheta_0=\vartheta$.
 We may then  fix $L\in (0,R)$ such that 
\begin{equation}\label{L}
c_* M  L^{q_*-1}\le 1\,,
\end{equation}
where $c_*>0$ stems from \eqref{MM4}.
We now set
\[
\wh f(w):= f(w+v_*)-f(v_*)-\p f(v_*)w \,,\qquad w\in \wh O_\xi:=O_\xi-v_*\,,
\]
and note,  for $v^0\in \ov\bB_{E_\alpha}(v_*,L/M)$, that  $u:=v(\cdot;v^0)-v_*$
  is a strong  solution to the  evolution problem
\begin{equation}\label{4se}
  u'= \bA u+\wh f(u)\,,\quad t>0\,,\qquad u(0)=u^0 :=v^0-v_*\in\ov\bB_{E_\alpha}(0,L/M\big)\,.
\end{equation}
Set
\[
t_1:=\sup\big\{t\in (0,t^+(v^0))\,:\, \text{$\|u\|_{C_\mu ((0,t],E_\xi)}<L$\, and  \, $\|u\|_{C ([0,t],E_\alpha)}<R $}\big\} >0\,,
\]
 noticing $\|u\|_{C_\mu((0,t],E_\xi)}\to 0$ as $t\to 0$  and $\|u(0)\|_\alpha\leq L<R$.
Let $t\in (0,t_1)$. Then, $\|u(s)\|_\xi\le s^{-\mu} L$ for all~${s\in (0,t]}$  and $u$  satisfies in view of \eqref{solse} and \eqref{4se} the variation-of-constants formula
\[
u(\tau)=e^{\tau \bA}u^0+\int_0^\tau e^{(\tau-s)\bA}\wh f(u(s))\,\rd s\,,\qquad 0\le \tau\le t\,,
\]
with 
\begin{equation}\label{nonf}
\|\wh f (u(s))\|_{\gamma_*}\leq  c_*\|u(s)\|_\xi^{q_*}\leq c_*L^{q_*} s^{-\mu q_*},\qquad s\in(0,t]\,,
\end{equation}
see \eqref{MM4}. 
 In view of the estimate
\begin{align}\label{est4}
\sup_{t>0}\left(t^{a}\int_0^1(1-s)^{-b}\, e^{-\eta t(1-s)} \, s^{-\mu q_*}\,\rd s\right)
&\le \Big(\sup_{r>0}r^{a}e^{-\eta r}\Big)\,\mathsf{B}_{1-a-b}\,,\qquad 0\le a<1-b\,,\quad \eta>0,
\end{align} 
we deduce from \eqref{v1as} and \eqref{nonf} that
\begin{align}
 \|u(t)\|_\alpha&\leq   \|e^{t \bA}\|_{\kL(E_\alpha)}\|u^0\|_\alpha+\int_0^t \|e^{(t-s)\bA}\|_{\kL(E_{\gamma_*},E_\alpha)}\|\wh f(u(s))\|_{E_{\gamma_*}}\,\rd s\nonumber\\
&\le \frac{  M}{4} \|u^0\|_\alpha +  \frac{c_*  M   L^{q_*}}{4}\,,\label{bbbse1}
\end{align}
and similarly
\begin{align}
 t^{\mu}\|u(t)\|_\xi&\leq    t^{\mu}\|e^{t \bA}\|_{\kL(E_\alpha, E_\xi)}\|u^0\|_{\alpha}+t^{\mu}\int_0^t \|e^{(t-s)\bA}\|_{\kL(E_{\gamma_*},E_\xi)}\|\wh f(u(s))\|_{E_{\gamma_*}}\,\rd s\nonumber\\
&\le \frac{M}{4} \|u^0\|_\alpha +  \frac{c_* M   L^{q_*}}{4}\,.\label{bbbse2}
\end{align}
It now follows  from~\eqref{L}, \eqref{bbbse1}, and \eqref{bbbse2}  that
\begin{equation}\label{globalestimates}
\|u\|_{C_\mu((0,t],E_\xi)}\le \frac{L}{2} \qquad    \text{and} \qquad  \|u\|_{C ([0,t],E_\alpha)}\leq\frac{R}{2}
\end{equation}
for each $t\in (0,t_1)$, hence $t_1=t^+(v^0)$.
In particular,  the solution~${v(\cdot;v^0):[0,t^+(v^0))\to E_\alpha}$  to \eqref{SEP} satisfies, in view of~\eqref{globalestimates},
\begin{equation*} 
\liminf_{t\nearrow t^+(v^0)}\mathrm{dist}_{E_\alpha}\big( v(t;v^0),\partial O_\alpha\big)>0
\end{equation*}
and
\[
\|v(t;v^0)\|_\xi\leq \|v_*\|_\xi+Lt^{-\mu}\,,\qquad   t\in (0,t^+(v^0))\,.\]
 This property together with~\eqref{Vor3} ensures that
\begin{equation*} 
 \underset{t\nearrow t^+(v^0)}{\limsup}\|f(v(t;v^0))\|_{0}<\infty\,, 
\end{equation*}
and  Theorem~\ref{T0'} implies now that $t^+(v^0)=\infty$.

Define 
\[
z(t):=\sup_{\tau\in(0,t]} \big(  \|u(\tau)\|_\alpha+\tau^\mu  \|u(\tau)\|_\xi\big)e^{\omega \tau}\,,\qquad t>0\,.
\]
 Given $0<\tau<t$, we then have 
\[
\|u(\tau)\|_\xi^{q_*}\le L^{{q_*}-1} z(t)\, \tau^{-\mu q_*}\,e^{-\omega \tau}\,,
\]
and together with   \eqref{MM4}, \eqref{v1as}, and the latter estimate we deduce, analogously to~\eqref{bbbse1}-\eqref{bbbse2}, that 
\begin{align*}
   z(t)\le \frac{  M}{2} \,\|u^0\|_\alpha + \frac{c_* M   L^{q_*-1}}{2}\, z(t)
\end{align*}
and therefore, by the choice of $L$ from~\eqref{L},
\[
z(t)\le  M\|u^0\|_\alpha \,,\qquad t>0\,,
\]
that is,
\[
\|u(t)\|_\alpha+t^{\mu}\|u(t)\|_\xi\le M\,e^{-\omega t}\,\|u^0\|_\alpha \,,\qquad t>0\,.
\]
This completes the proof.
\qed

\section{Applications}\label{Sec:5}

In this section we apply our theory to various semilinear and quasilinear evolution problems of parabolic type, examining both critical and noncritical regimes, to explore the stability properties of their equilibrium solutions. 
The applications include quasilinear problems with quadratic semilinearities (see Example~\ref{Exam0}), a parabolic-parabolic chemotaxis system (Example~\ref{Exam1}), and a quasilinear evolution equation in critical spaces exhibiting scaling invariance (Example~\ref{Exam2}).

 \subsection{Quaslilinear Problems with Quadratic Semilinearity}\label{Exam0} 
To give a first flavor of our results we consider for densely embedded Banach spaces 
$E_1\hookrightarrow E_0$ and complex interpolation spaces 
$E_\theta:=[E_0,E_1]_\theta$  with $\theta\in (0,1)$, the quasilinear problem
\begin{equation}\label{QCP}
u'=A(u)u +Q(u,u)\,,\quad t>0\,,\qquad u(0)=u^0\,,
\end{equation}
with
\begin{subequations}\label{E1}
\begin{equation}\label{w1}
A\in C^{1-}\big(E_\beta,\mathcal{H}(E_1,E_0) \big)
\end{equation}
and a quadratic bilinear term
\begin{equation}\label{w2}
Q\in \mathcal{L}^2\big(E_\xi,E_\gamma \big)\,.
\end{equation}
 We are interested in the stability properties of the  equilibrium solution $u_*=0\in E_1$ to \eqref{QCP}.
 In the context of \eqref{QCP}, it is  appropriate to choose
\begin{equation}\label{w3}
0< \gamma< \beta< \xi<1\,,\qquad q:=2\,,\qquad 2\xi-1-\gamma\leq \alpha\in (\beta,\xi)\,.
\end{equation}
 In the semilinear case when $A(u)=A\in \mathcal{H}(E_1,E_0)$, we put $\gamma_*:=\gamma$ and $q_*:=q=2$,
noticing that thus~$\alpha_{\rm crit}=\alpha_{\rm crit}^*=2\xi-1-\gamma$.
Then, $f:E_\xi\to E_\gamma$ with  $f(u):=Q(u,u)$ satisfies~\eqref{chris}  and~\eqref{MM4}.
\end{subequations}
 Consequently, we obtain 
from  Remark~\ref{RR1}, Theorem~\ref{T1}, Theorem~\ref{Tsemi}, and Theorem~\ref{T3}:

\begin{thm}\label{T4x}
Assume \eqref{E1}. If $s(A(0))<0$, then $u_*=0$ is asymptotically 
exponentially stable in $E_\alpha$ for the Cauchy problem~\eqref{QCP}. 
If $A$ is Fr\'echet differentiable with Lipschitz continuous derivative and
\begin{equation*}
  \sigma_+(A(0)):=\{\lambda\in \sigma(A(0))\,:\,  {\rm 
Re\,}\lambda>0\}\neq \emptyset\,,\qquad \inf \{{\rm Re\,}\lambda\,:\, 
\lambda\in \sigma_+(A(0))\}>0\,,
\end{equation*}
then $u_*=0$ is unstable in $E_\alpha$  for the Cauchy 
problem~\eqref{QCP}.  In the semilinear case when $A$ is independent of 
$u$, the same results hold if~\eqref{w3} is replaced by
\begin{equation*} 
0\le \gamma<  \xi\le 1\,,\quad (\gamma,\xi)\not=(0,1)\,,
\end{equation*}
 and either
\[
 2\xi-1-\gamma< \alpha\in 
[\gamma,\xi)\qquad\text{or}\qquad 2\xi-1-\gamma= \alpha\in 
(\gamma,\xi)\,.
\]
\end{thm}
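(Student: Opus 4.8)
The plan is to deduce Theorem~\ref{T4x} purely by verifying, for the concrete data $O_\beta=E_\beta$, $O_\alpha=E_\alpha$, $v_*=u_*=0$, and $f(u):=Q(u,u)$, the hypotheses of the abstract results Theorems~\ref{T1}, \ref{T3}, and~\ref{Tsemi}; the decisive simplification throughout is that the equilibrium is the origin.

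First I would record the elementary consequences of bilinearity. The telescoping identity
\[
Q(w,w)-Q(v,v)=Q(w-v,w)+Q(v,w-v)
\]
together with $Q\in\kL^2(E_\xi,E_\gamma)$ yields at once
\[
\|f(w)-f(v)\|_\gamma\le \|Q\|\big(\|w\|_\xi+\|v\|_\xi\big)\|w-v\|_\xi\,,
\]
which is precisely the special estimate \eqref{chris} with $q=2$ and hence implies \eqref{AS2x}. Moreover $\|f(h)-f(0)\|_\gamma=\|Q(h,h)\|_\gamma\le\|Q\|\,\|h\|_\xi^2=o(\|h\|_\xi)$, so $f$ is Fréchet differentiable at $0$ with $\p f(0)=0$, verifying \eqref{a4a} and \eqref{MM2}; and since $v_*=0$, the map $A(\cdot)v_*\equiv0$ is trivially differentiable with $(\p A(0)[\cdot])v_*=0$. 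Consequently the linearized operator collapses to $\bA=A(0)$, so the spectral conditions \eqref{a4e}, \eqref{MM5}, and \eqref{a2c} coincide \emph{verbatim} with the stated hypotheses on $A(0)$.

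With these observations the quasilinear assertions are immediate. The parameter choice \eqref{w3} matches \eqref{a1a} and \eqref{XX} with $\alpha_{\rm crit}=2\xi-1-\gamma$, assumption \eqref{w1} is exactly \eqref{a1b}--\eqref{a1c}, and the interpolation requirement \eqref{AS2y} is free of charge via Remark~\ref{RR1} because all spaces are complex interpolation spaces; hence stability follows from Theorem~\ref{T1}. For instability, the extra hypothesis that $A$ is differentiable with Lipschitz derivative gives \eqref{a2a} (with $f$ a fixed quadratic, hence $C^\infty$), the vanishing derivatives place $\p f(0)$ and $(\p A(0)[\cdot])v_*$ trivially in $\kL(E_\beta,E_0)$ so that \eqref{a2b} holds with $\eta=\beta$, and \eqref{a2c} is the stated spectral condition; Theorem~\ref{T3} then applies.

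For the semilinear case $A(u)\equiv A$ I would invoke Theorem~\ref{Tsemi}, again using Remark~\ref{RR1} for \eqref{Vorcc}. When $0<\gamma<\xi<1$ this is alternative~(a) and the verification above suffices. The step I expect to be the main obstacle is the endpoint regime $\gamma=0$ or $\xi=1$, which forces the use of alternative~(b) and the structural bound \eqref{VorMM}: here I would set $(\gamma_*,q_*):=(\gamma,2)$, whence $\|f(w)-f(0)-\p f(0)w\|_{\gamma_*}=\|Q(w,w)\|_\gamma\le\|Q\|\,\|w\|_\xi^2$ verifies \eqref{MM4} and $\alpha_{\rm crit}^*=2\xi-1-\gamma=\alpha_{\rm crit}$. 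Since $\gamma_*=\gamma$, the case $\gamma_*\in(0,\gamma)$ never occurs, so the additional requirement in~(b) demands the strict inequality $\alpha>\alpha_{\rm crit}^*$ exactly when $\alpha=\gamma$; matching this against the two admissible ranges for $\alpha$ in the theorem shows they are identical to the hypotheses of Theorem~\ref{Tsemi}(b). Exponential stability then follows, and semilinear instability is obtained from Theorem~\ref{T3} as before, now with $A$ constant and therefore trivially $C^{2-}$.
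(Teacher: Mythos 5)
Your proposal is correct and follows exactly the paper's route: the paper likewise sets $(\gamma_*,q_*)=(\gamma,2)$, notes that the bilinearity of $Q$ yields \eqref{chris} and \eqref{MM4} with $\p f(0)=0$ and $(\p A(0)[\cdot])v_*=0$ so that $\bA=A(0)$, and then cites Remark~\ref{RR1} together with Theorems~\ref{T1}, \ref{Tsemi}, and \ref{T3}. Your write-up simply makes explicit the hypothesis checks that the paper leaves to the reader.
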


Of course, as pointed out in Remark~\ref{RR1}, also other interpolation functors  than the complex functor~$[\cdot,\cdot]_\theta$ (such as real or continuous interpolation functors)
can be considered.\\
 
In the critical case $2\xi-1-\gamma= \alpha\in (\gamma,\xi)$,
Theorem~\ref{T4x} has been established previously in the particular context of  a semilinear asymptotic model for  atmospheric flows describing morning glory clouds, see \cite[Theorem~4.1]{MRW25}. 
 It is worth noting that Theorem~\ref{T4x} yields an exponential stability result which, to the best of our knowledge, cannot be obtained via the classical approach~\cite{MW_MOFM20, Amann_Teubner, D89}.   
For similar results for semilinear parabolic problems with general superlinear nonlinearities we refer to \cite[Corollary~1.4]{MRW25} and \cite[Example~4.2]{MRW25}. 
We also refer to \cite[Corollary~2.2]{PSW18} for a related exponential stability result in the semilinear case  within the framework of maximal $L_p$-regularity.

 \subsection{A Parabolic-Parabolic Chemotaxis System}\label{Exam1} We present an application of the exponential stability result in the semilinear case, cf. Theorem~\ref{Tsemi}, and of the instability result in Theorem \ref{T3} in the context of a
  parabolic-parabolic chemotaxis system
with logistic source, see e.g. \cite{Wi10, BW16}, 
\begin{subequations}\label{PP0}
\begin{align}
\partial_t u&=\mathrm{div}\big(\nabla u- \chi u\nabla v) +\kappa u(1-u)\,,&& t>0\,,\quad x\in \Omega\label{oi0}\,,\\
\partial_t v&=\Delta v + u- v\,,&& t>0\,,\quad x\in \Omega\,,\label{oi1}
\end{align}
where $\kappa$ and $\chi$ are positive constants,  subject to the initial conditions
\begin{align}\label{initcon}
u(0,x)=u^0(x)\,, \quad v(0,x)=v^0(x)\,,\qquad  x\in \Omega\,,
\end{align}
and homogeneous Neumann boundary conditions
\begin{align}\label{boundcon}
 \partial_\nu u= \partial_\nu v&= 0 \qquad  \text{on } \partial\Omega\,.
\end{align}
\end{subequations}
The functions $u^0,\,  v^0:\Omega\to\R$ are given   and $\Omega\subset\R^n$,  $n\geq 1$, is a smooth bounded domain with outward unit normal $\nu$.
Some constants from \cite{Wi10, BW16}, which are qualitatively irrelevant to the analysis, have been replaced by $1$.

We observe that problem \eqref{PP0} has exactly two constant  equilibrium solutions, namely
\begin{equation*}
(u_1,v_1)=(0,0)\qquad\text{and}\qquad (u_2,v_2)=(1,1)\,.
\end{equation*} 
It is well-known that for sufficiently smooth non-negative initial 
data, \eqref{PP0} possesses  a unique bounded global classical solution provided that  $\Omega$ is convex and $\kappa$ is sufficiently large, see e.g. \cite{Wi10}. 
We prove herein that~\eqref{PP0} is locally well-posed for more general initial data and that the equilibrium solution~${(u_1,v_1)=(0,0)}$ is always unstable.
Moreover, under certain restrictions  on the coefficients~$\chi$ and~$\kappa$,
we establish the exponential  stability of the equilibrium $(u_2,v_2)=(1,1)$. 
The advantages of the parabolic theory based on time-weighted function spaces in the context of \eqref{PP0} are highlighted in~\cite{MW_PRSE}, 
 where local well-posedness is established for a more general class of models, assuming initial data $ u_0 \in L_2(\Omega)$ in the physically relevant dimensions~${ n \leq 3}$. 
 Moreover, it is shown in~\cite{MW_PRSE} that global $L_2$-bounds on $u$ suffice to guarantee the global existence of solutions to~\eqref{PP0}. 
 In this context, our result in Theorem~\ref{TA1} complements the analysis in~\cite{MW_PRSE} by providing a corresponding stability result within the same low-regularity functional framework.

Before presenting  Theorem~\ref{TA1}, we  define for $p\in (1,\infty)$  the Banach spaces
$$
F_0:=L_p(\Omega)\,,\qquad F_1:= W_{p,N}^{2}(\Omega)=H_{p,N}^{2}(\Omega)=\{v\in H_{p}^{2}(\Omega)\,:\,  \p_\nu v=0 \text{ on } \partial\Omega\}\,, 
$$
and set, see \cite[\S 4]{Amann_Teubner},
$$
B_0:=\Delta_N:= \Delta\big|_{W_{p,N}^{2}(\Omega)}\in \mathcal{H}\big(W_{p,N}^{2}(\Omega),L_p(\Omega)\big)\,.
$$
Let
$$
\big\{(F_\theta,B_\theta)\,:\, -1\le \theta<\infty\big\}\ 
$$
be the interpolation-extrapolation scale generated by $(F_0,B_0)$ and the 
complex interpolation functor~$[\cdot,\cdot]_\theta$ (see \cite[\S 6]{Amann_Teubner} and \cite[\S V.1]{LQPP}). Then,
\begin{equation}\label{f1}
B_\theta\in \mathcal{H}(F_{1+\theta},F_\theta)\,,\qquad -1\le \theta<\infty\,,
\end{equation}
where
\begin{equation}\label{f2}
 F_\theta\doteq H^{2\theta}_{p,N}(\Omega):=\left\{\begin{array}{ll} \{v\in H_{p}^{2\theta}(\Omega) \,:\, \p_\nu v=0 \text{ on } 
 \partial\Omega\}\,, &1+\frac{1}{p}<2\theta<3+\frac{1}{p} \,,\\[3pt]
	 H_{p}^{2\theta}(\Omega)\,, & -1+\frac{1}{p}< 2\theta<1 +\frac{1}{p}\,,\end{array} \right.
\end{equation}
see \cite[Theorem~7.1; Equation (7.5)]{Amann_Teubner}\footnote{This property  is stated in \cite{Amann_Teubner} for
   $-1+\frac{1}{p} < 2\theta\le 2$. However,
since $(1-\Delta_N)^{-1}\in \mathcal{L}(H_{p}^{2\theta-2}(\Omega),H_{p}^{2\theta}(\Omega))$ for $2<2\theta<3+1/p$, see \cite[Theorem 5.5.1]{Tr78}, we obtain the full range in \eqref{f2}.}.
Moreover, since  $\Delta_N-1$ has bounded imaginary powers, see e.g. from~\cite[III.~Examples 4.7.3~(d)]{LQPP}, we  infer from  \cite[Remarks~6.1~(d)]{Amann_Teubner} that
\begin{equation}\label{f3}
[F_\beta,F_\alpha]_\theta\doteq F_{(1-\theta)\beta+\theta\alpha}\,,\qquad -1\leq \beta<\alpha\,,\quad \theta\in(0,1)\,.
\end{equation}

 Theorem~\ref{TA1} below provides the aforementioned stability result for  problem \eqref{PP0}, which,   despite the quasilinear term $\mathrm{div}\big(u\nabla v)$ in \eqref{oi0}, is treated
  as a semilinear evolution problem in the subcritical regime when $\alpha_{\rm  crit}<\alpha$, see \eqref{XXs}.
\begin{thm}\label{TA1}
Fix  $\kappa,\,\chi\in(0,\infty)$, $n\geq 1$,  $p\in (1,\infty) $ with $p>n/2$.
 Then,   \eqref{PP0} generates a semiflow on~$L_p(\Omega)\times H_{2p}^{1}(\Omega)$. 
 Moreover, the following hold:
\begin{itemize}
\item[(i)] If the maximal existence time $t^+\in(0,\infty]$ of the solution  $(u,v)$ to \eqref{PP0} is finite, then
\[
\limsup_{t\nearrow t^+}\|u(t)\|_{L_p}=\infty\,.
\] 
\item[(ii)] The equilibrium solution $(u_1,v_1)=(0,0)$ is unstable in $L_p(\Omega)\times H_{2p}^{1}(\Omega)$.
\item[(iii)] If $\chi\leq 2$ and $\kappa>1/4$, then $(u_2,v_2)=(1,1)$ is  exponentially stable in $L_p(\Omega)\times H_{2p}^{1}(\Omega)$. More precisely, given   
\[
 0< 2\ve< \min\Big\{1-\frac{1}{p},\,  1-\frac{n}{2p}\Big\}\qquad \text{and}\qquad \mu>\frac{1-\ve}{2}\,,
\] 
 there exist constants $r,\,\omega\in (0,1)$ and $M\geq 1$
 such that for all initial data~${(u_0,v_0)\in L_p(\Omega)\times H_{2p}^{1}(\Omega)}$ with~${\|(u_0,v_0)\|_{L_p\times H^{1}_{2p}}\leq r}$, the solution~$(u,v)$ to~\eqref{PP0}
    is globally defined and
 \begin{equation}\label{estex1}
  \|(u(t)-1,v(t)-1) \|_{L_p\times H^{1}_{2p}}+t^{\mu}\|(u(t)-1,v(t)-1)\|_{H^{1-\ve}_p\times H^{2-\ve}_{2p}}\le M e^{-\omega t}\,\|(u_0,v_0)\|_{L_p\times H^{1}_{2p}} \,,\qquad t>0\,.
 \end{equation}
\end{itemize} 
\end{thm}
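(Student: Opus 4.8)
The plan is to recast the chemotaxis system \eqref{PP0} as an abstract semilinear problem \eqref{SEP} on the product space and then verify the hypotheses of Theorems~\ref{T0'}, \ref{Tsemi}, and \ref{T3} in the concrete scales $F_\theta\doteq H^{2\theta}_{p,N}(\Omega)$. First I would introduce the unknowns $u$ and $v$ (for part (iii) shifted by the equilibrium, $\widetilde u=u-1$, $\widetilde v=v-1$) and write the linear part as the diagonal operator $A=\mathrm{diag}(\Delta_N,\Delta_N-1)$ acting in the first component of the interpolation--extrapolation scale, so that $A\in\mathcal{H}(E_1,E_0)$ with
\[
E_0:=F_{-1/2}\times F_0\,,\qquad E_1:=F_{1/2}\times F_1\,,
\]
the shift by $-1/2$ in the first slot being exactly what makes the quasilinear divergence term $\mathrm{div}(\chi u\nabla v)$ admissible as a \emph{semilinear} perturbation landing in $E_0$. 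The phase space is $E_\alpha\doteq L_p(\Omega)\times H^1_{2p}(\Omega)$ corresponding to $\alpha=1/2$, and the spaces $E_\xi$ and $E_\gamma$ appearing in \eqref{estex1} are read off from the choice $\xi=(2-\ve)/2$ and a suitable $\gamma$, after which \eqref{f3} guarantees the reiteration identities \eqref{Vorcc} automatically (cf.\ Remark~\ref{RR1}).

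The central analytic step is to verify the growth/Lipschitz estimate \eqref{Vor3} for the nonlinearity
\[
f(u,v):=\big(-\mathrm{div}(\chi u\nabla v)+\kappa u(1-u),\, u\big)\,,
\]
together with the Fr\'echet differentiability \eqref{MM2} and the quadratic remainder bound \eqref{MM4}. The term $\kappa u(1-u)$ is handled by H\"older's inequality and Sobolev embeddings, but the genuinely delicate contribution is $\mathrm{div}(u\nabla v)$: I would estimate it in $E_\gamma$ (i.e.\ in a negative-order space $F_{\gamma-1/2}$ in the first slot) by first writing $\|\mathrm{div}(u\nabla v)\|_{F_{\gamma-1/2}}\lesssim \|u\nabla v\|_{F_{\gamma}}$ and then controlling the product $u\nabla v$ by a bilinear Sobolev/H\"older product estimate. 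This is where the dimensional constraint $p>n/2$ enters: it ensures $H^{2\xi}_p\hookrightarrow L_\infty$ (or at least a suitable multiplier estimate) so that the product of two $E_\xi$-factors controls the $E_\gamma$-norm, yielding the quadratic scaling $q=q_*=2$. I expect this product estimate, and the bookkeeping of which Sobolev indices close, to be the main obstacle.

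Next I would compute the linearization at $(u_2,v_2)=(1,1)$. Setting $w=(\widetilde u,\widetilde v)$, the operator $\partial f(1,1)$ is read off from the affine part of $f$, and the linearized generator is
\[
\bA=A+\partial f(1,1)
   =\begin{pmatrix}\Delta-\kappa & -\chi\Delta\\[2pt] 1 & \Delta-1\end{pmatrix}
\]
on the Neumann domain. For part (iii) the spectral bound condition \eqref{MM5} must be checked: diagonalizing the constant-coefficient symbol on each Neumann eigenvalue $-\lambda_k\le 0$ of $-\Delta_N$, the $2\times2$ matrices have eigenvalues with negative real parts precisely when the trace is negative and the determinant positive uniformly in $k$; the hypotheses $\chi\le 2$ and $\kappa>1/4$ are exactly what secures this (the determinant condition reduces to a quadratic in $\lambda_k$ whose discriminant is controlled by $\chi\le2$, while $\kappa>1/4$ handles the $\lambda_k=0$ mode). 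Having established $s(\bA)<0$, Theorem~\ref{Tsemi}(b) with $(\gamma_*,q_*)=(\gamma,2)$ (the remainder of $f$ is exactly quadratic, so \eqref{MM4} holds with $\alpha^*_{\rm crit}=\alpha_{\rm crit}$) yields the exponential decay \eqref{estex1}, with the admissible range of $\mu>\xi-\alpha=(1-\ve)/2$ matching the statement.

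For parts (i) and (ii), part (i) is the blow-up criterion \eqref{blowse1} of Theorem~\ref{T0'}: since $O_\alpha=E_\alpha$ here the distance-to-boundary alternative is vacuous, and the remaining alternative $\limsup\|f(v(t))\|_0=\infty$ reduces, after the product estimates above, to the $L_p$-norm of $u$ becoming unbounded. Part (ii) follows from the instability Theorem~\ref{T3} applied at $(u_1,v_1)=(0,0)$: the linearization there is $\mathrm{diag}$-perturbed by the source term $\kappa u$, giving the symbol matrix $\bigl(\begin{smallmatrix}\Delta+\kappa&0\\ 1&\Delta-1\end{smallmatrix}\bigr)$, whose $\lambda_k=0$ mode produces the eigenvalue $+\kappa>0$; this verifies \eqref{a2c} (the $\sigma_+$ condition) since the unstable part of the spectrum is bounded away from the imaginary axis, and the required $C^{2-}$-regularity \eqref{a2a} holds because $f$ is (at most) quadratic, hence smooth. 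The semiflow assertion and uniqueness then follow directly from Theorem~\ref{T0'}.
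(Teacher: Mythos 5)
Your overall skeleton (recast \eqref{PP0} as a semilinear problem in an interpolation--extrapolation scale, verify \eqref{chris}/\eqref{MM4} with $q=q_*=2$, compute $\bA$ at each equilibrium, check \eqref{MM5} resp.\ \eqref{a2c}, and invoke Theorems~\ref{T0'}, \ref{Tsemi}, \ref{T3}) is the paper's strategy, and your linearization $\bA=\bigl(\begin{smallmatrix}\Delta-\kappa & -\chi\Delta\\ 1 & \Delta-1\end{smallmatrix}\bigr)$ and the constant-mode eigenvalue $\kappa$ at $(0,0)$ are correct. But there is a genuine gap in the functional setting, and it is exactly where you yourself flag the ``main obstacle.'' First, your scale $E_0=F_{-1/2}\times F_0$, $E_1=F_{1/2}\times F_1$ with $F_\theta\doteq H^{2\theta}_{p,N}$ puts \emph{both} components on the $L_p$-based scale, so $E_{1/2}=L_p\times H^1_{p,N}$, not the space $L_p\times H^1_{2p}$ of the statement; the asymmetry in integrability between the two components is not cosmetic but essential. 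Second, and more seriously, your parameter choice $\alpha=1/2$, $\xi=1-\ve/2$ forces, via the subcriticality requirement $\alpha>\alpha_{\rm crit}=2\xi-1-\gamma$, that $\gamma>1/2-\ve$; hence the divergence term must map $E_\xi\to E_\gamma$ with first slot $H^{2\gamma-1}_p$ of order just below $0$, i.e.\ you need $u\nabla v\in H^{2\gamma}_p$ with $2\gamma$ close to $1$. For the product of two factors in $H^{1-\ve}_p$ the paraproduct condition reads $2(1-\ve)-n/p>2\gamma$, which with $2\gamma>1-2\ve$ forces $p\ge n$ --- so your estimate cannot close in the range $n/2<p\le n$ that the theorem covers (e.g.\ $u_0\in L_2$, $n=3$). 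The paper avoids both problems by working in $E_\theta=H^{2\theta-2\ve}_{p,N}\times H^{1+2\theta-2\ve}_{2p,N}$ with the \emph{small} exponents $\gamma=\ve/3<\alpha=\ve<\xi=(1+\ve)/2$, so that $\alpha_{\rm crit}=2\ve/3<\ve$ and the required bilinear estimate is the mixed one $H^{1-\ve}_{p}\cdot H^{1-\ve}_{2p}\to H^{1-4\ve/3}_{p}$, which holds under $2\ve<1-n/(2p)$, i.e.\ precisely under $p>n/2$.

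Two smaller remarks. Your verification of $s(\bA)<0$ by projecting onto Neumann eigenspaces and checking trace/determinant of the $2\times2$ symbol matrices $\bigl(\begin{smallmatrix}-\lambda_k-\kappa & \chi\lambda_k\\ 1 & -\lambda_k-1\end{smallmatrix}\bigr)$ is a legitimate alternative to the paper's argument (which tests the eigenvalue equations with real and imaginary parts and uses Young's inequality, where $\chi\le2$ absorbs $\chi\nabla u\cdot\nabla v$ and $\kappa>1/4$ controls the zeroth-order coupling); your attribution of which hypothesis controls which mode is garbled, but the computation goes through. Finally, for part (i) the reduction of $\limsup\|f\|_0=\infty$ to unboundedness of $\|u\|_{L_p}$ alone is not immediate from the product estimates --- it requires bootstrapping the $v$-equation --- and the paper delegates this to \cite[Theorem~5.1]{MW_PRSE} rather than proving it ad hoc.
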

\begin{proof} The local well-posedness of \eqref{PP0} (together with (i)) has been established in \cite[Theorem~5.1]{MW_PRSE} 
in a slightly more general context and we therefore only sketch the proof of this result.
Set 
\begin{align*}
&E_0:= H_{p,N}^{-2\ve}(\Omega)\times H_{2p,N}^{1-2\ve}(\Omega)\,, &E_1:= H_{p,N}^{2-2\ve}(\Omega)\times H_{2p,N}^{3-2\ve}(\Omega) \,,
\end{align*}
so that, by \eqref{f2}-\eqref{f3},
$$
E_\theta=H_{p,N}^{2\theta-2\ve}(\Omega)\times H_{q,N}^{1+2\theta-2\ve}(\Omega)\,,\qquad 2\theta\in [0,2]\setminus\{2\ve+ 1+1/p\,,\, 2\ve+1/2p\}\,.
$$
Choosing
\[
q:=2\qquad\text{and}\qquad 0<  \gamma:=\frac{\ve}{3}<\alpha:=\ve<\xi:=\frac{1+\ve}{2}<1\,,
\]
we have
\begin{equation*} 
 \alpha_{\rm crit}=2\xi-1-\gamma=\frac{2\ve}{3}<\alpha\in (\gamma,\xi) \,, 
\end{equation*}
see \eqref{Xs}, and
\[
E_\xi= H_{p,N}^{1- \ve}(\Omega)\times H_{2p,N}^{2- \ve}(\Omega)\hookrightarrow E_\alpha= L_p(\Omega)\times H_{2p}^{1}(\Omega)\hookrightarrow E_\gamma= H_{p,N}^{- 4\ve/3}(\Omega)\times H_{2p,N}^{1- 4\ve/3}(\Omega)\,.
\]
Since  $H_{p,N}^{2-2\ve}(\Omega)\hookrightarrow H_{2p,N}^{1-2\ve}(\Omega),$ we obtain from  \cite[I.~Theorem 1.6.1]{LQPP}   and  \eqref{f1}-\eqref{f2}   that 
$$
A:=\begin{pmatrix}
\Delta+\kappa&0\\[1ex]
1&\Delta-1
\end{pmatrix}\in \mathcal{H}(E_1,E_0)\,.
$$
Let $f:E_\xi\to E_\gamma$ be given by 
\[
f(w): =-\big(\chi\mathrm{div}\big( u\nabla v) +\kappa u^2,\,0\big)\,,\qquad w=(u,v)\in E_\xi.
\] 
Using the  continuity of the multiplications 
\[
H_{p,N}^{1- \ve}(\Omega)\bullet H_{2p,N}^{1- \ve}(\Omega)\longrightarrow H_{p,N}^{1- 4\ve/3}(\Omega) \qquad\text{and}\qquad
H_{p,N}^{1- \ve}(\Omega)\bullet H_{p,N}^{1- \ve}(\Omega)\longrightarrow L_p(\Omega)\,,
\]
 see \cite[Theorem~4.1]{AmannMult},  it is not difficult to conclude that   there is a constant $C>0$ such that
\begin{equation}\label{condf}
\|f(w)-f(\bar w)\|_{E_\gamma}\le C\big[\|w \|_{E_\xi}+\|\bar w\|_{E_\xi}\big] \|w-\bar w\|_{E_\xi} \,,\qquad w,\, \bar w\in E_\xi\,.
\end{equation}

 The local well-posedness of \eqref{PP0} and the blow-up criterion  (i)  follow by using \cite[Theorem 1.2]{MW_PRSE} as in the proof of \cite[Theorem~5.1]{MW_PRSE}.

In order to address the stability properties of the equilibria $(0,0)$ and $(1,1)$ we note that~${f\in C^{2-}(E_\xi, E_\gamma)}$ with 
 \[
\p f(\bar w) w= -\big(\chi\mathrm{div}\big( u\nabla \bar v+\bar u\nabla v) +2\kappa u \bar u\,,\,0 \big)\,,\qquad w=(u,v)\,, \,\bar w=(\bar u,\bar v)\in E_\xi\,.
 \]
 Moreover,  since~$A+\p f(w)\in \mathcal{H}(E_1,E_0)$ for all $w\in E_\xi$ with compact embedding~${E_1\hookrightarrow E_0}$, 
the spectrum   of~${A+\p f(w)}$ consists entirely of isolated  eigenvalues with finite algebraic multiplicities~\cite[Theorem~III.6.29]{Ka95}. 

For $w_1=(0,0)$ we have $\p f(w_1)=0$ and the linearization $A+\p f(w_1)=A$ has the positive eigenvalue~${\lambda=\kappa}$  (with a constant eigenvector). 
 Recalling Remark~\ref{RR1} and \eqref{f3}, we are in a position to apply Theorem \ref{T3} and deduce that $(0,0)$ is an unstable equilibrium for \eqref{PP0}.

For $w_2=(1,1)$  we have 
 \[
\p f(w_2) w=\big(\!-\chi\Delta v -2\kappa u \,,\,0 \big)\,,\qquad w=(u,v)\in E_\xi\,,
 \]
and, in order to apply Theorem~\ref{Tsemi}, it remains to verify that~\eqref{MM5} is  satisfied in the context of \eqref{PP0}. 
 Let thus $\lambda={\rm Re\,}\lambda+i\,{\rm Im\,}\lambda\in\mathbb{C}$ be an eigenvalue of~${A+\p f(w_2)}$ with eigenvector $0\neq w=(u,v)\in H^2_{ N}( \Omega)^2$ 
 by elliptic regularity, see e.g. \cite[Theorem~15.2]{ADN54}. 
It then holds 
\begin{align}
\Delta u-\chi \Delta v-\kappa u&=\lambda u\quad \text{in $\Omega$}\,,\label{EWE1}\\
\Delta   v+u-  v&=\lambda v\quad \text{in $\Omega$}\,. \label{EWE2}
\end{align}
Let $u=u_1+iu_2$ and  $v=v_1+iv_2$.
 Testing the real part of  \eqref{EWE1} with $u_1$ and the imaginary part with $u_2$ and proceeding in the same way with \eqref{EWE2} (where we test with $v_1$ and $v_2$, respectively), we arrive at
\begin{align*}
-\sum_{i=1}^2\int_{\Omega}\big[\big(|\nabla u_i|^2-\chi \nabla u_i\cdot\nabla v_i+|\nabla v_i|^2\big)+\big(\kappa| u_i|^2-u_i v_i+ v_i^2\big)\big]\,\rd x={\rm Re\,}\lambda\int_{\Omega}(|u|^2+|v|^2)\,\rd x\,.
\end{align*}
Since $\chi\leq 2$ an $\kappa>1/4$, Young's inequality  and the observation that $v\neq 0$ (otherwise $w=0$  by \eqref{EWE2}) leads us now to 
\begin{align*}
{\rm Re\,}\lambda\int_{\Omega}(|u|^2+|v|^2)\,\rd x\leq \Big(\frac{1}{4\kappa}-1\Big) \int_{\Omega}|v|^2\,\rd x<0\,,
\end{align*}
hence ${\rm Re\,}\lambda<0$, which proves~\eqref{MM5}.
Assertion (iii) is now a direct consequence of Theorem~\ref{Tsemi}.
\end{proof}

\subsection{A Quasilinear Problem with Scaling Invariance}\label{Exam2} We shall apply Theorem~\ref{T1} in the context of a quasilinear evolution equation from  \cite{PSW18, QS19}:
\begin{subequations}\label{Ex1}
\begin{equation}\label{Ex1a}
 \partial_tu={\rm div}(a(u)\nabla u) +|\nabla u|^{\kappa}\qquad \text{in $\Omega  $\,,\, $t>0 $\,,} 
\end{equation}
subject to homogeneous Dirichlet boundary conditions
\begin{equation}\label{Ex1b}
  u=0\qquad \text{on $\p\Omega$\,, \, $t>0$\,,} \\
\end{equation}
and the initial condition
\begin{equation}\label{Ex1c}
u(0)=u_0\,,
\end{equation}
\end{subequations}
where ${\kappa>3}$, $u_0:\Omega\to\R$ is a given function,  and $\Omega\subset\R^n$  with $n\geq 1$ is a smooth bounded domain.

In order to recast \eqref{Ex1}   in an appropriate framework we set for $p\in(1,\infty)$ 
\begin{equation*}
 H_{p,D}^{2\theta}(\Omega):=\left\{\begin{array}{ll} \{v\in H_{p}^{2\theta}(\Omega) \,:\,  v=0 \text{ on } 
 \partial\Omega\}\,, &\frac{1}{p}<2\theta\leq 2 \,,\\[3pt]
	 H_{p}^{2\theta}(\Omega)\,, & -2+\frac{1}{p}< 2\theta< \frac{1}{p}\,.\end{array} \right.
\end{equation*}
As observed  in \cite{PSW18, MRW25},  the space~$H^{s_c}_{p,D}(\Omega)$ with
\[
s_c:=\frac{n}{p}+\frac{\kappa-2}{\kappa-1}
\] 
can be identified (via a scaling invariance argument) as a critical space for  \eqref{Ex1}.
The next result,  which is new to the best of our knowledge, establishes the exponential stability of the zero solution to \eqref{Ex1} in this critical space.
For  a related exponential stability result in a critical Besov space we refer to \cite[Example 2]{PSW18}.
\begin{thm}\label{T:A3}
Let $\kappa>3$ and let $a\in C^{1}(\R)$ be a  strictly positive function
with uniformly Lipschitz continuous derivative.
We choose $p\in (2n,(\kappa-1)n)$  with $p \neq (n-1)(\kappa-1)$ and~$\tau\in(0,1)$ such that
\begin{equation*}
\frac{1}{2}<2\tau<1-\frac{n}{p} 
\end{equation*}
and set
\begin{equation*}
 0<\bar s:=2\tau+ \frac{n}{p}<s_c<s:=1+\frac{n(\kappa-1)}{p\kappa}<  2-2\tau\,,
\end{equation*}
as well as
\begin{equation*}
 \mu:=\frac{1}{2(\kappa-1)}-\frac{n}{2p\kappa}\in(0,1)\,.
\end{equation*}
Then, \eqref{Ex1} is locally well-posed in  $H^{s_c}_{p,D}(\Omega)$.
  Moreover, there exist constants $r,\,\omega\in (0,1)$ and $M\geq 1$ such that for all $\|u_0\|_{H^{s_c}_{p,D}}\leq r$ the solution~${u=u(\cdot; u_0)}$ to~\eqref{Ex1}
    is globally defined and
 \begin{equation}\label{estex3}
  \|u(t)\|_{H^{s_c}_{p}}+t^{\mu}\|u(t)\|_{H^{s}_p}\le M\, e^{-\omega t}\,\|u_0\|_{H^{s_c}_p} \,,\qquad t>0\,.
 \end{equation}
\end{thm}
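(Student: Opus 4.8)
The plan is to cast \eqref{Ex1} into the abstract quasilinear form \eqref{EE} and apply Theorem~\ref{T1} in the critical regime $\alpha=\alpha_{\rm crit}$. I would work on the negative-order extrapolation scale generated by the Dirichlet Laplacian: set
\[
E_0:=H^{-2\tau}_{p}(\Omega)\,,\qquad E_1:=H^{2-2\tau}_{p,D}(\Omega)\,,
\]
with the complex interpolation functor, so that $E_\theta\doteq H^{2\theta-2\tau}_{p,(D)}(\Omega)$ for $2\theta-2\tau$ away from the boundary-condition thresholds. I would then fix $q:=\kappa$ and
\[
\gamma:=\tau\,,\quad \beta:=2\tau+\tfrac{n}{2p}\,,\quad \alpha:=\tau+\tfrac{s_c}{2}\,,\quad \xi:=\tau+\tfrac{s}{2}\,,
\]
so that $E_\gamma\doteq L_p(\Omega)$, $E_\beta\doteq H^{\bar s}_p(\Omega)$, $E_\alpha\doteq H^{s_c}_{p,D}(\Omega)$, and $E_\xi\doteq H^{s}_{p,D}(\Omega)$. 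The hypotheses on $p$ and $\tau$ (together with $p\neq(n-1)(\kappa-1)$) are precisely what guarantee $0<\gamma<\beta<\alpha<\xi<1$ and the clean identification of the interpolation spaces, while $\mu=\xi-\alpha=(s-s_c)/2$ matches the time weight in \eqref{stable}. The quasilinear and semilinear parts are $A(u)w:=\mathrm{div}(a(u)\nabla w)$ (with Dirichlet conditions) and $f(u):=|\nabla u|^{\kappa}$, and the equilibrium is $v_*=u_*=0$.

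Next I would verify the structural assumptions \eqref{AS1}, \eqref{AS2y}, \eqref{AS4}. For the quasilinear part I would use the embedding $E_\beta\hookrightarrow C^{2\tau}(\bar\Omega)$ (valid since $\bar s-\tfrac np=2\tau\in(\tfrac12,1)$) and the assumption that $a\in C^1$ has Lipschitz derivative to show that $u\mapsto a(u)$ is a locally Lipschitz, uniformly positive Nemytskii map into a suitable multiplier class on a ball $O_\beta$ about $0$; Amann's theory of divergence-form operators on interpolation--extrapolation scales (see \cite{Amann_Teubner} and \cite[\S V.1]{LQPP}) then yields \eqref{a1b}, i.e.\ $A\in C^{1-}(O_\beta,\mathcal H(E_1,E_0))$. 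The interpolation identities \eqref{AS2y} hold automatically by Remark~\ref{RR1}, since the whole scale is complex.

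The semilinearity is the heart of the matter, but the choice of $s$ makes it transparent: $\nabla$ maps $H^s_p$ into $H^{s-1}_p$, and since $s-1=\tfrac{n(\kappa-1)}{p\kappa}$ is exactly the critical smoothness for $H^{s-1}_p\hookrightarrow L_{p\kappa}(\Omega)$ (with $p>2n$ ensuring $0<s-1<\tfrac12$), the pointwise bound $\big||\zeta|^\kappa-|\eta|^\kappa\big|\le \kappa\big(|\zeta|^{\kappa-1}+|\eta|^{\kappa-1}\big)|\zeta-\eta|$ together with Hölder's inequality gives
\[
\|f(w)-f(v)\|_{L_p}\le c\big(\|w\|_{\xi}^{\kappa-1}+\|v\|_{\xi}^{\kappa-1}\big)\|w-v\|_{\xi}\,,
\]
which is \eqref{chris} with $q=\kappa$ and target $E_\gamma=L_p$. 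A short computation with $\xi=\tau+s/2$ and $\gamma=\tau$ then shows $\alpha_{\rm crit}=\tfrac{\kappa\xi-1-\gamma}{\kappa-1}=\alpha$, so $E_\alpha=H^{s_c}_{p,D}(\Omega)$ is indeed the critical phase space, reflecting the scaling invariance.

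Finally, for \eqref{AS4} I would note that $v_*=0\in E_1$ solves $A(0)\cdot0+f(0)=0$, that $A(\cdot)v_*\equiv0$ and (since $\kappa>1$) $\partial f(0)=0$, so the linearization collapses to $\bA=A(0)=a(0)\Delta_D$, a positive multiple of the Dirichlet Laplacian; its spectrum is $\{-a(0)\lambda_k\}_k$ with $\lambda_k>0$, whence $s(\bA)=-a(0)\lambda_1<0$ and \eqref{a4e} holds. With all hypotheses of Theorem~\ref{T1} verified, local well-posedness follows from Theorem~\ref{T0}, and Theorem~\ref{T1} delivers \eqref{estex3}, which is exactly \eqref{stable} read off with $v_*=0$, $\|\cdot\|_\alpha=\|\cdot\|_{H^{s_c}_p}$, $\|\cdot\|_\xi=\|\cdot\|_{H^{s}_p}$, and $\mu=\xi-\alpha$. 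I expect the main obstacle to be the generation statement \eqref{a1b} on the negative-order base space $E_0=H^{-2\tau}_p$: showing that $\mathrm{div}(a(u)\nabla\cdot)$ generates an analytic semigroup there with locally Lipschitz dependence on $u$ requires coefficient multiplier estimates on $H^{\pm 2\tau}_p$, and this is precisely where the lower bound $2\tau>\tfrac12$ enters.
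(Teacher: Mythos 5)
Your proposal is correct and follows essentially the same route as the paper: the same extrapolated scale $E_\theta\doteq H^{2\theta-2\tau}_{p,D}(\Omega)$ based at $E_0=H^{-2\tau}_p(\Omega)$, the same exponents $\gamma=\tau$, $\beta=\tau+\bar s/2$, $\xi=\tau+s/2$, $\alpha=\alpha_{\rm crit}=\tau+s_c/2$, the same verification of \eqref{chris} via $H^{s-1}_p\hookrightarrow L_{p\kappa}$, the identification $\bA=a(0)\Delta_D$, and the application of Theorems~\ref{T0} and~\ref{T1}. The only difference is that the paper outsources the Lipschitz estimate \eqref{Lips} and the generation property $A\in C^{1-}(E_\beta,\mathcal H(E_1,E_0))$ to \cite[Example~3]{MRW25}, whereas you sketch these verifications directly.
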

\begin{proof} The local well-posedness result follows by arguing as in \cite[Example 3]{MRW25}, where homogeneous Neumann conditions were considered instead.
Therefore we only sketch the proof of the local well-posedness result. Set
$$
F_0:=L_p(\Omega)\,,\qquad F_1:= W_{p,D}^{2}(\Omega)=H_{p,D}^{2}(\Omega) \,,
$$
and note  from \cite[\S 4]{Amann_Teubner} that 
$$
B_0:=\Delta_D:=\Delta|_{W_{p,D}^{2}(\Omega)}\in \mathcal{H}\big(W_{p,D}^{2}(\Omega),L_p(\Omega)\big)\,.
$$
Let further
$$
\big\{(F_\theta,B_\theta)\,:\, -1\le \theta<\infty\big\}\ 
$$
be the interpolation-extrapolation scale generated by $(F_0,B_0)$ and the 
complex interpolation functor~$[\cdot,\cdot]_\theta$ (see \cite[\S 6]{Amann_Teubner} and \cite[\S V.1]{LQPP}), that is,
\begin{equation}\label{f2x}
B_\theta\in \mathcal{H}(F_{1+\theta},F_\theta)\,,\quad -1\le \theta<\infty\,,
\end{equation}
with (see \cite[Theorem~7.1; Equation (7.5)]{Amann_Teubner})
\begin{equation}\label{f2N}
 F_\theta\doteq H_{p,D}^{2\theta}(\Omega)\,,\qquad   2\theta\in\Big(-2+\frac{1}{p},2\Big]\setminus\Big\{\frac{1}{p}\Big\}\,.
\end{equation}
Moreover, since
$\Delta_D$ has bounded imaginary powers (see~\cite[III.~Examples 4.7.3~(d)]{LQPP}), we infer from \cite[Remarks~6.1~(d)]{Amann_Teubner} that
\begin{equation}\label{f3N}
 [F_\beta,F_\alpha]_\theta\doteq F_{(1-\theta)\beta+\theta\alpha}\,,\qquad -1\leq \beta<\alpha\,,\quad \theta\in(0,1)\,.
\end{equation}
With  $p$ and $\tau$ fixed   in the statement, we set
$$
E_\theta:= H^{2\theta-2\tau}_{p,D}(\Omega)\,,\qquad 2\tau+\frac{1}{p}\neq 2\theta\in[0,2]\,,
$$
 and point out that  $E_\theta=[E_0,E_1]_\theta$ and that
 none of the constants $\bar s$, $s$, and $s_c$  is equal to $1/p$. 
Let
 $$
 q:=\kappa>3
$$
and 
\begin{equation} \label{bnnn1} 
 0<\gamma:=\tau<\beta:= \tau+ \frac{\bar s }{2}< \alpha :=\tau +\frac{s}{2}<1\,,
\end{equation}
where
$$
\alpha_{\rm  crit}=\frac{\kappa \xi-1-\gamma}{\kappa-1}=\tau+ \frac{s_c}{2} \in(\beta,\xi)\,.
$$
Note that
\[
E_\xi=H^{s}_{p,D}(\Omega)\hookrightarrow E_\alpha=H^{s_c}_{p,D}(\Omega)\hookrightarrow E_\beta=H^{\bar s}_{p,D}(\Omega)\hookrightarrow E_\gamma=L_{p}(\Omega)\,.
\]
We may now formulate \eqref{Ex1} as the  quasilinear  evolution problem
\begin{equation}\label{QEE2}
u'=A(u)u+f(u)\,,\quad t>0\,,\qquad u(0)=u_0\,,
\end{equation} 
where $A:E_\beta\to\kL(E_1,E_0)$ is  defined by 
\[
A(u)v:={\rm div} (a(u)\nabla v)\,,\qquad  v\in E_1\,,\quad u\in E_\beta\,,
\]
and $f:E_\xi\to E_\gamma$  is defined by
\[
f(u):=|\nabla u|^{\kappa}\,,\qquad  u\in E_\xi\,.
\] 
As shown in \cite[Example 3]{MRW25},
\begin{align}\label{Lips}
\|f(u)-f(v)\|_{E_\gamma}\leq \kappa (\|u\|_{E_\xi}^{\kappa-1}+\|v\|_{E_\xi}^{\kappa-1})\|u-v\|_{E_\xi}\,,\qquad u,\, v\in E_\xi\,,
\end{align}
 and moreover $A\in C^{1-}(E_\beta,\mathcal{H}(E_1,E_0))$.
The latter property together with \eqref{f2N}-\eqref{bnnn1} and  \eqref{Lips} enables us to apply Theorem~\ref{T0} 
to~\eqref{QEE2}  and deduce the local well-posedness of this   problem in~$ H^{s_c}_{p,D}(\Omega)$.

We next verify \eqref{AS4} for  the stationary solution $v_*:=0\in E_1$. 
To this end we first infer from \eqref{Lips} that the map~$f:E_\xi\to E_\gamma$ is Fr\'echet differentiable in $0$ with $\p f(0)=0$.
Since the spectral bound 
\[
s(\Delta_D)=\sup\{\mathrm{Re}\, \lambda : \lambda\in\sigma(\Delta_D)\}
\] 
is negative,  assumptions~\eqref{AS4} are satisfied.
We are thus in a position to apply Theorem~\ref{T1} and establish in this way the exponential stability of the zero solution to~\eqref{QEE2} in~$ H^{s_c}_{p,D}(\Omega)$, see \eqref{estex3}.
\end{proof}



\section*{Acknowledgement}
We express our gratitude to the referees for their careful reading of the manuscript and for their valuable suggestions, which have helped to improve the paper.

\bibliographystyle{siam}
\bibliography{Literature}
\end{document}